\newcounter{lemma}[section]
\newcounter{corollary}[section]
\newcounter{remark}[section]
\newcounter{theorem}[section]
\newcounter{proposition}[section]
\newcounter{example}
\numberwithin{equation}{section}
\begin{document}

\markboth{V. DESYATKA, E.~SEVOST'YANOV}{\centerline{ON THE BOUNDARY
BEHAVIOR ...}}

\def\cc{\setcounter{equation}{0}
\setcounter{figure}{0}\setcounter{table}{0}}

\overfullrule=0pt

%\normalsize\large

\author{VICTORIA DESYATKA, EVGENY SEVOST'YANOV}

\title{
{\bf ON UNCLOSED MAPPINGS WITH POLETSKY INEQUALITY}}

\date{\today}
\maketitle

%\large
\begin{abstract}
The manuscript is devoted to the boundary behavior of mappings with
bounded and finite distortion. We consider mappings of domains of
the Euclidean space that satisfy weighted Poletsky inequality.
Assume that, the definition domain is finitely connected on its
boundary and, in addition, on the set of all points which are
pre-images of the cluster set of this boundary. Then the specified
mappings have a continuous boundary extension provided that the
majorant in the Poletsky inequality satisfies some integral
divergence condition, or has a finite mean oscillation at every
boundary point.
\end{abstract}

\bigskip
{\bf 2010 Mathematics Subject Classification: Primary 30C65;
Secondary 31A15, 31B25}

\section{Introduction}

Results on continuous boundary extension of quasiconformal mappings
and their generalizations are well known. On this topic we could
point to the following publications, see e.g. \cite{Cr},
\cite{GRSY}, \cite{MRSY$_2$}, \cite{Na$_1$}--\cite{Na$_2$} and
\cite{Vu}. We should also note a number of publications that assume
that the mapping is closed. The condition for the mapping to be
closed is very important when solving this problem, see, for
example, \cite[Theorem~3.3]{Vu} and \cite{SSD}. This article is
devoted to studying the case when the mapping is not closed, and the
closeness condition itself is replaced by some other, milder
conditions in comparison with it.

\medskip
From this position, it should be said that the (conditional)
majority of known mappings, including analytic functions, are not
closed, and the study of their boundary behavior may turn out to be
more important than the study of homeomorphisms or (more general)
closed mappings. For example, an analytic function $f(z)=z^4,$ on
the plane is a closed mapping in the unit disk ${\Bbb D}=\{z\in
{\Bbb C}: |z|<1\}$ but in the disk $B(1, 1)=\{z\in {\Bbb C}:
|z-1|<1\}$ it is no longer such. Due to the well-known theorems of
Casorati-Weierstrass or Picard, even an isolated singularity of an
analytic function will be its essential singularity only in the case
when the mapping is not closed (closed mappings, in particular,
preserve the boundary of the domain, and, at the same time, the
cluster set of an analytic function with an essentially singularity
is the entire extended complex plane).

\medskip
Subsequent studies are related to the concept of the modulus of
families of paths, the definition of which is given below. A Borel
function $\rho:{\Bbb R}^n\,\rightarrow [0,\infty] $ is called {\it
admissible} for the family $\Gamma$ of paths $\gamma$ in ${\Bbb
R}^n,$ if the relation
\begin{equation}\label{eq1.4}
\int\limits_{\gamma}\rho (x)\, |dx|\geqslant 1
\end{equation}
holds for all (locally rectifiable) paths $ \gamma \in \Gamma.$ In
this case, we write: $\rho \in {\rm adm} \,\Gamma .$ Let $p\geqslant
1,$ then {\it $p$-modulus} of $\Gamma $ is defined by the equality
\begin{equation}\label{eq1.3gl0}
M_p(\Gamma)=\inf\limits_{\rho \in \,{\rm adm}\,\Gamma}
\int\limits_{{\Bbb R}^n} \rho^p (x)\,dm(x)\,.
\end{equation}
We set $M(\Gamma):=M_n(\Gamma).$ The Poletsky classical inequality
is the inequality
\begin{equation}\label{eq5}
M(f(\Gamma))\leqslant K\cdot M(\Gamma)\,,
\end{equation}
where $K\geqslant 1$ is some constant, and $\Gamma$ is an arbitrary
family of paths in $D.$ It was obtained by E. Poletsky in the early
70s last century, see~\cite[Theorem~1]{Pol}. Our further research is
related to the fact that we are considering mappings of a more
general nature, see below. Let $x_0\in {\Bbb R}^n,$
$0<r_1<r_2<\infty,$
\begin{equation}\label{eq1ED}
S(x_0,r) = \{ x\,\in\,{\Bbb R}^n : |x-x_0|=r\}\end{equation}
and
\begin{equation}\label{eq1**}
A=A(x_0, r_1,r_2)=\left\{ y\,\in\,{\Bbb R}^n:
r_1<|y-y_0|<r_2\right\}\,.\end{equation}
Given sets $E,$ $F\subset\overline{{\Bbb R}^n}$ and a domain
$D\subset {\Bbb R}^n$ we denote by $\Gamma(E,F,D)$ a family of all
paths $\gamma:[a,b]\rightarrow \overline{{\Bbb R}^n}$ such that
$\gamma(a)\in E,\gamma(b)\in\,F $ and $\gamma(t)\in D$ for $t \in
(a, b).$ Let $S_i=S(x_0, r_i),$ $i=1,2,$ where spheres $S(x_0, r_i)$
centered at $x_0$ of the radius $r_i$ are defined in~(\ref{eq1ED}).
Let $Q:{\Bbb R}^n\rightarrow {\Bbb R}^n$ be a Lebesgue measurable
function satisfying the condition $Q(x)\equiv 0$ for $x\in{\Bbb
R}^n\setminus D.$ Let $p\geqslant 1.$ A mapping $f:D\rightarrow
\overline{{\Bbb R}^n}$ is called a {\it ring $Q$-mapping at the
point $x_0\in \overline{D}\setminus \{\infty\}$ with respect to
$p$-modulus}, if the condition
\begin{equation} \label{eq2*!A}
M_p(f(\Gamma(S_1, S_2, D)))\leqslant \int\limits_{A\cap D} Q(x)\cdot
\eta^n (|x-x_0|)\, dm(x)
\end{equation}
holds for all $0<r_1<r_2<d_0:=\sup\limits_{x\in D}|x-x_0|$ and all
Lebesgue measurable functions $\eta:(r_1, r_2)\rightarrow [0,
\infty]$ such that
\begin{equation}\label{eqA2}
\int\limits_{r_1}^{r_2}\eta(r)\,dr\geqslant 1\,.
\end{equation}
Note that, (\ref{eq5}) implies~(\ref{eq2*!A})--(\ref{eqA2}) whenever
$p=n$ and $Q(x)\equiv K.$ So, (\ref{eq2*!A}) may be interpreted as
Poletsky weighed inequality with the weight $Q(x).$

\begin{remark}\label{rem1}
Note that all quasiregular mappings $f:D\rightarrow {\Bbb R}^n$
satisfy the condition
\begin{equation}\label{eq22}
M(f(\Gamma(S_1, S_2, D)))\leqslant \int\limits_{D\cap
A(y_0,r_1,r_2)} K_I\cdot \eta^n (|x-x_0|)\, dm(x) \end{equation}
at each point $x_0\in \overline{D}\setminus\{\infty\}$ with some
constant $K_I=K_I(f)\geqslant 1$ and an arbitrary
Lebesgue-dimensional function $\eta: (r_1,r_2)\rightarrow
[0,\infty],$ which satisfies condition~(\ref{eqA2}). Indeed,
quasiregular mappings satisfy the condition
\begin{equation}\label{eq24}
M(f(\Gamma(S_1, S_2, D)))\leqslant \int\limits_{D\cap
A(x_0,r_1,r_2)} K_I\cdot \rho^n(x)\, dm(x)
\end{equation}
for an arbitrary function $\rho\in{\rm adm}\, \Gamma(S_1, S_2, D)),$
see \cite[Theorem~8.1.II]{Ri}. Put $\rho(x):=\eta(|x-x_0|)$ for
$x\in A(x_0,r_1,r_2)\cap D,$ and $\rho(x)=0$ otherwise. By Luzin
theorem, we may assume that the function $\rho$ is Borel measurable
(see, e.g., \cite[Section~2.3.6]{Fe}). Due
to~\cite[Theorem~5.7]{Va},
$$\int\limits_{\gamma}\rho(x)\,|dx|\geqslant
\int\limits_{r_1}^{r_2}\eta(r)\,dr\geqslant 1$$
for each (locally rectifiable) path $\gamma$ in $\Gamma(S(x_0, r_1),
S(x_0, r_2), A(x_0, r_1, r_2)).$ By substituting the function $\rho$
mentioned above into~(\ref{eq24}), we obtain the desired
ratio~(\ref{eq22}).
\end{remark}

\medskip
Recall that a mapping $f:D\rightarrow {\Bbb R}^n$ is called {\it
discrete} if the pre-image $\{f^{-1}\left(y\right)\}$ of each point
$y\,\in\,{\Bbb R}^n$ consists of isolated points, and {\it is open}
if the image of any open set $U\subset D$ is an open set in ${\Bbb
R}^n.$ Later, in the extended space $\overline{{{\Bbb R}}^n}={{\Bbb
R}}^n\cup\{\infty\}$ we use the {\it spherical (chordal) metric}
$h(x,y)=|\pi(x)-\pi(y)|,$ where $\pi$ is a stereographic projection
$\overline{{{\Bbb R}}^n}$ onto the sphere
$S^n(\frac{1}{2}e_{n+1},\frac{1}{2})$ in ${{\Bbb R}}^{n+1},$ namely,
$$h(x,\infty)=\frac{1}{\sqrt{1+{|x|}^2}}\,,$$
\begin{equation}\label{eq3C}
h(x,y)=\frac{|x-y|}{\sqrt{1+{|x|}^2} \sqrt{1+{|y|}^2}}\,, \quad x\ne
\infty\ne y
\end{equation}
(see \cite[Definition~12.1]{Va}). Further, the closure
$\overline{A}$ and the boundary $\partial A$ of the set $A\subset
\overline{{\Bbb R}^n}$ we understand relative to the chordal metric
$h$ in $\overline{{\Bbb R}^n}.$

\medskip
The boundary of $D$ is called {\it weakly flat} at the point $x_0\in
\partial D,$ if for every $P>0$ and for any neighborhood $U$
of the point $x_0$ there is a neighborhood $V\subset U$ of the same
point such that $M(\Gamma(E, F, D))>P$ for any continua $E, F\subset
D$ such that $E\cap\partial U\ne\varnothing\ne E\cap\ partial V$ and
$F\cap\partial U\ne\varnothing\ne F\cap\partial V.$ The boundary of
$D$ is called weakly flat if the corresponding property holds at any
point of the boundary $D.$

\medskip
Given a mapping $f:D\rightarrow {\Bbb R}^n$, we denote
\begin{equation}\label{eq1_A_4} C(f, x):=\{y\in \overline{{\Bbb
R}^n}:\exists\,x_k\in D: x_k\rightarrow x, f(x_k) \rightarrow y,
k\rightarrow\infty\}
\end{equation}
and
\begin{equation}\label{eq1_A_5} C(f, \partial
D)=\bigcup\limits_{x\in \partial D}C(f, x)\,.
\end{equation}
In what follows, ${\rm Int\,}A$ denotes the set of inner points of
the set $A\subset \overline{{\Bbb R}^n}.$ Recall that the set
$U\subset\overline{{\Bbb R}^n}$ is neighborhood of the point $z_0,$
if $z_0\in {\rm Int\,}A.$

\medskip
We say that a function ${\varphi}:D\rightarrow{\Bbb R}$ has a {\it
finite mean oscillation} at a point $x_0\in D,$ write $\varphi\in
FMO(x_0),$ if
\begin{equation}\label{eq29*!}
\limsup\limits_{\varepsilon\rightarrow
0}\frac{1}{\Omega_n\varepsilon^n}\int\limits_{B( x_0,\,\varepsilon)}
|{\varphi}(x)-\overline{{\varphi}}_{\varepsilon}|\ dm(x)<\infty\,,
\end{equation}
where
$$\overline{{\varphi}}_{\varepsilon}=\frac{1}
{\Omega_n\varepsilon^n}\int\limits_{B(x_0,\,\varepsilon)}
{\varphi}(x) dm(x)\,.$$
Let $Q:{\Bbb R}^n\rightarrow [0,\infty]$ be a Lebesgue measurable
function. We set
$$Q^{\,\prime}(x)=\left\{
\begin{array}{rr}
Q(x), &   Q(x)\geqslant 1\,, \\
1,  &  Q(x)<1\,.
\end{array}
\right.$$ Denote by $q^{\,\prime}_{x_0}$ the mean value of
$Q^{\,\prime}(x)$ over the sphere $|x-x_0|=r$, that means,
\begin{equation}\label{eq32*B}
q^{\,\prime}_{x_0}(r):=\frac{1}{\omega_{n-1}r^{n-1}}
\int\limits_{|x-x_0|=r}Q^{\,\prime}(x)\,d{\mathcal H}^{n-1}\,.
\end{equation}
Note that, using the inversion $\psi(x)=\frac{x}{|x|^2},$ we may
give the definition of $FMO$ as well as the quantity
in~(\ref{eq32*B}) for $x_0=\infty.$

\medskip
We say that the boundary $\partial D$ of a~domain $D$ in ${\Bbb
R}^n,$ $n\geqslant 2,$ is {\it strongly accessible at a~point
$x_0\in
\partial D$ with respect to the $p$-modulus} if for each neighborhood
$U$ of $x_0$ there
exist a~compact set $E\subset D$, a~neighborhood $V\subset U$ of
$x_0$ and $\delta>0$ such that
\begin{equation}
\label{eq1.3_a} M_p(\Gamma(E,F, D))\geqslant \delta
\end{equation}
for each continuum $F$ in~$D$ that intersects $\partial U$ and
$\partial V$. When $p=n$, we will usually drop the prefix in the
''$p$-modulus'' when speaking about~(\ref{eq1.3_a}).

\medskip
Some analogues of the following result were established for the case
of homeomorphisms in \cite[Lemma~5.20, Corollary~5.23]{MRSY$_1$},
\cite[Lemma~6.1, Theorem~6.1]{RS} and \cite[Lemma~5, Theorem~3]{Sm}.
For open discrete and closed mappings, see, e.g., in \cite{SSD},
\cite{Sev$_1$} and \cite{Sev$_2$}.

\medskip
\begin{theorem}\label{th3}
{\it\, Let $p\geqslant 1,$ let $D$ and $D^{\,\prime}$ be domains in
${\Bbb R}^n,$ $n\geqslant 2,$ $f:D\rightarrow D^{\,\prime}$ be an
open discrete mapping satisfying
relations~(\ref{eq2*!A})--(\ref{eqA2}) at the point $b\in
\partial D$ with respect to $p$-modulus,
$f(D)=D^{\,\prime}.$ In addition, assume that

\medskip
1) the set
$$E:=f^{\,-1}(C(f, \partial
D))$$
is nowhere dense in $D$ and $D$ is finitely connected on $E,$ i.e.,
for any $z_0\in E$ and any neighborhood $\widetilde{U}$ of $z_0$
there is a neighborhood $\widetilde{V}\subset \widetilde{U}$ of
$z_0$ such that $(D\cap \widetilde{V})\setminus E$ consists of
finite number of components.

\medskip
2) for any neighborhood $U$ of $b$ there is a neighborhood $V\subset
U$ of $b$ such that:

\medskip
2a) $V\cap D$ is connected,

\medskip
2b) $(V\cap D)\setminus E$ consists at most of $m$ components,
$1\leqslant m<\infty,$

\medskip
3) $D^{\,\prime}\setminus C(f, \partial D)$ consists of finite
components, each of them has a strongly accessible boundary with
respect to $p$-modulus.

Suppose that at least one of the following conditions is satisfied:
1) a function $Q$ has a finite mean oscillation at a point $b;$ 2)
$q_{b}(r)\,=\,O\left(\left[\log{\frac1r}\right]^{n-1}\right)$ as
$r\rightarrow 0;$ 3) the condition
\begin{equation}\label{eq6}
\int\limits_{0}^{\delta(b)}\frac{dt}{t^{\frac{n-1}{p-1}}
q_{b}^{\,\prime\,\frac{1}{p-1}}(t)}=\infty
\end{equation}
holds for some $\delta(b)>0.$ Then $f$ has a continuous extension to
$b.$

If the above is true for any point $b\in\partial D,$ the mapping $f$
has a continuous extension
$\overline{f}:\overline{D}\rightarrow\overline{D^{\,\prime}},$
moreover, $\overline{f}(\overline{D})=\overline{D^{\,\prime}}.$ }
\end{theorem}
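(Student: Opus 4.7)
The plan is proof by contradiction: suppose $f$ has no continuous extension at $b$, so that there exist $x_k,x'_k\in D$ with $x_k,x'_k\to b$, $f(x_k)\to y_1$, $f(x'_k)\to y_2$ and $y_1\ne y_2$ (so $y_1,y_2\in C(f,b)\subset C(f,\partial D)$). Since $E$ is nowhere dense in $D$, a small perturbation places $x_k,x'_k\in D\setminus E$. Using hypothesis~2), choose a neighborhood $V$ of $b$ with $V\cap D$ connected and $(V\cap D)\setminus E$ having at most $m$ components; pigeonhole arranges $x_k$ to lie in a fixed component $T_1$ and $x'_k$ in a fixed component $T_2$. Because $T_\ell\cap E=\varnothing$ and $E=f^{-1}(C(f,\partial D))$, each image $f(T_\ell)$ lies in $D^{\,\prime}\setminus C(f,\partial D)$ and, being connected, in a single component from hypothesis~3), which I denote $D^{\,\prime}_{j_\ell}$. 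Consequently $y_\ell\in\overline{D^{\,\prime}_{j_\ell}}\cap C(f,\partial D)\subset\partial D^{\,\prime}_{j_\ell}$, and the strong accessibility of $\partial D^{\,\prime}_{j_1}$ with respect to the $p$-modulus is available at $y_1$.

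For a lower modulus bound, pick a neighborhood $U_1$ of $y_1$ with $y_2\notin\overline{U_1}$ and extract from hypothesis~3) a compact $K\subset D^{\,\prime}_{j_1}$, a neighborhood $V_1\subset U_1$ of $y_1$ and $\delta>0$ such that $M_p(\Gamma(K,F,D^{\,\prime}_{j_1}))\geqslant\delta$ for every continuum $F\subset D^{\,\prime}_{j_1}$ meeting $\partial U_1$ and $\partial V_1$. When $j_1=j_2$, connect $x_k$ inside $T_1$ by a path $\alpha_k$ to a fixed point $x'_{k_0}\in T_2=T_1$ (large $k_0$); then $F_k:=f(\alpha_k)\subset D^{\,\prime}_{j_1}$ joins $V_1$ to the complement of $U_1$. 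When $j_1\ne j_2$, use hypothesis~2a) to join $x_k$ and $x'_{k_0}$ by a path in $V\cap D$ and truncate to its initial sub-path $\beta_k\subset\overline{T_1}$, whose image $f(\beta_k)\subset\overline{D^{\,\prime}_{j_1}}$ terminates on $\partial D^{\,\prime}_{j_1}\cap C(f,\partial D)$; a preliminary shrinking of $U_1$ makes $f(\beta_k)$ exit $U_1$. In either case one obtains a continuum $F_k$ with $M_p(\Gamma(K,F_k,D^{\,\prime}_{j_1}))\geqslant\delta$.

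For the matching upper bound, the open discreteness of $f$ enables the Poletsky maximal-lift construction: each path in $\Gamma(K,F_k,D^{\,\prime}_{j_1})$ lifts to a path in $D$ starting on the compact set $f^{-1}(K)\subset D$, producing a family $\widetilde{\Gamma}_k$ with $M_p(\Gamma(K,F_k,D^{\,\prime}_{j_1}))\leqslant M_p(f(\widetilde{\Gamma}_k))$. Because $K$ avoids $C(f,\partial D)$, the set $f^{-1}(K)$ is compact in $D$ and separated from $b$ by a fixed annulus, while for large $k$ the continuum $\alpha_k$ (or $\beta_k$) lies in $B(b,r_k)\cap D$ with $r_k\to 0$. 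Hence $\widetilde{\Gamma}_k$ is subordinate to the family $\Gamma(S(b,r_k),S(b,r_0),D)$ for some fixed $r_0>0$, and the weighted Poletsky inequality~(\ref{eq2*!A}) yields
\[
M_p(f(\widetilde{\Gamma}_k))\leqslant\int\limits_{A(b,r_k,r_0)\cap D}Q(x)\,\eta^n(|x-b|)\,dm(x)
\]
for any admissible $\eta$. Each of the three alternative hypotheses on $Q$ supplies a classical choice of $\eta$ (the profile $\eta(r)=1/(r^{(n-1)/(p-1)}q_b^{\prime\,1/(p-1)}(r)I(r_k))$, where $I(r_k)$ is the truncated integral in~(\ref{eq6}), for the divergence case, and logarithmic profiles for the FMO and $[\log(1/r)]^{n-1}$-growth cases) forcing the right-hand side to tend to $0$ as $r_k\to 0$, contradicting $\delta>0$. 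The global conclusion $\overline{f}\colon\overline{D}\to\overline{D^{\,\prime}}$ with $\overline{f}(\overline{D})=\overline{D^{\,\prime}}$ follows by applying the pointwise extension at every boundary point and invoking $f(D)=D^{\,\prime}$. I expect the principal obstacle to be the case $T_1\ne T_2$: arranging $U_1$ so that the truncated sub-continuum $f(\beta_k)$ actually meets $\partial U_1$ while still excluding $y_2$ is the delicate interplay between hypotheses~1)--3) that replaces the standard closedness assumption.
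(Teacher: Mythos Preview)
There is a genuine gap. Your upper-bound step rests on the claim that ``for large $k$ the continuum $\alpha_k$ (or $\beta_k$) lies in $B(b,r_k)\cap D$ with $r_k\to 0$,'' but this is false as written: in your case $j_1=j_2$ the path $\alpha_k$ joins $x_k$ to a \emph{fixed} point $x'_{k_0}$, so its support cannot shrink to $b$; and in either case your components $T_1,T_2$ live in a \emph{single fixed} neighborhood $V$ chosen once from hypothesis~2). The paper's argument instead invokes a path lemma (its Lemma~2.1) which, using hypothesis~2) at every scale together with the finite-connectedness on $E$, produces for each $l$ a path $\gamma_l$ joining $x_{k_l}$ to $y_{k_l}$ that lies entirely in a neighborhood $V_l\subset B(b,2^{-l})$ and meets $E$ in at most $m-1$ points. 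A pigeonhole on these $\leqslant m$ sub-arcs (your two-component split $T_1,T_2$ is too coarse) then gives, via the triangle inequality, a sub-arc $G_k$ whose image has chordal diameter $\geqslant\delta/(m-1)$ and lies in a single component $K$ of $D'\setminus C(f,\partial D)$; strong accessibility is applied at the limit $w_0$ of one endpoint of $f(G_k)$, not at $y_1$ fixed in advance. This is exactly the ``delicate interplay'' you flag at the end, and it is not optional.

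A second, related gap is the lifting direction. You lift each $\beta\in\Gamma(K,F_k,D'_{j_1})$ starting on $f^{-1}(K)$; the resulting maximal lift then terminates at \emph{some} preimage of $\beta(b)\in F_k$, i.e.\ somewhere in $f^{-1}(F_k)$, which for a non-injective $f$ need not lie near $b$ even when $\alpha_k$ does. Hence your family $\widetilde{\Gamma}_k$ is not minorized by $\Gamma(S(b,r_k),S(b,r_0),D)$. The paper parametrizes the paths from the $F_k$-side instead: the family $\Gamma_k$ consists of paths $\beta$ with $\beta(a)\in f(\gamma_k)$, and the maximal $f$-liftings are taken \emph{starting on $|\gamma_k|$ itself} (Proposition~3.1 allows the initial point of the lift to be prescribed). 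Then each lift begins in $V_k\subset B(b,2^{-k})$ by construction, and one shows it is complete and ends on $f^{-1}(C_0')$, which is bounded away from $\partial D$ because $C_0'\subset K\subset D'\setminus C(f,\partial D)$. With both the shrinking paths and the correct lifting direction in place, the minorization $\Gamma_k'>\Gamma(S(b,2^{-k}),S(b,\varepsilon_0),D)$ holds and the ring inequality~(\ref{eq2*!A}) yields the contradiction.
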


\section{Lemma on paths}

\begin{lemma}\label{lem2}
{\it\, Let $D$ be a domain in ${\Bbb R}^n,$ $n\geqslant 2,$ and let
$x_0\in \partial D.$ Assume that $E$ is  closed and nowhere dense in
$D,$ and $D$ is finitely connected on $E,$ i.e., for any $z_0\in E$
and any neighborhood $\widetilde{U}$ of $z_0$ there is a
neighborhood $\widetilde{V}\subset \widetilde{U}$ of $z_0$ such that
$(D\cap \widetilde{V})\setminus E$ consists of finite number of
components.

In addition, assume that the following condition holds: for any
neighborhood $U$ of $x_0$ there is a neighborhood $V\subset U$ of
$x_0$ such that:

\medskip
a) $V\cap D$ is connected,

\medskip
b) $(V\cap D)\setminus E$ consists at most of $m$ components,
$1\leqslant m<\infty.$

Let $x_k, y_k\in D\setminus E,$ $k=1,2,\ldots ,$ be a sequences
converging to $x_0$ as $k\rightarrow\infty.$ Then there are
subsequences $x_{k_l}$ and $y_{k_l},$ $l=1,2,\ldots ,$ belonging to
some sequence of neighborhoods $V_l,$ $l=1,2,\ldots ,$ of the point
$x_0$ such that $V_l\subset B(x_0, 2^{\,-l}),$ $l=1,2,\ldots ,$ and,
in addition, any pair $x_{k_l}$ and $y_{k_l}$ may be joined by a
path $\gamma_l$ in $V_l,$ where $\gamma_l$ contains at most $m-1$
points in~$E.$ }
\end{lemma}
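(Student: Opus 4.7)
My plan is to realise $\gamma_l$ as a concatenation of short ``approach arcs'' meeting $E$ only at the transition points between suitably chosen components of $(V_l\cap D)\setminus E,$ with the number of transitions bounded by the diameter of the adjacency graph of those components.

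Applying assumption~(a)--(b) at $x_0$ with $U:=B(x_0,2^{\,-l})$ produces a neighborhood $V_l\subset B(x_0,2^{\,-l})$ of $x_0$ such that $V_l\cap D$ is connected and $(V_l\cap D)\setminus E$ splits into components $U_1^{(l)},\ldots,U_{r_l}^{(l)}$ with $r_l\leqslant m.$ Because $x_k,y_k\to x_0,$ a diagonal choice yields a strictly increasing sequence $k_l$ with $x_{k_l},y_{k_l}\in V_l\cap(D\setminus E),$ say $x_{k_l}\in U_{a_l}^{(l)}$ and $y_{k_l}\in U_{b_l}^{(l)}.$ Let $G_l$ be the graph on the vertices $U_i^{(l)}$ whose edges are the pairs with non-empty intersection of closures in $V_l\cap D$ (any such intersection necessarily lies in $E$).

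Next I claim that $G_l$ is connected. If it split into vertex classes $C_1,C_2,$ the open sets $W_1:=\bigcup_{U\in C_1}U$ and $W_2:=\bigcup_{U\in C_2}U$ would have disjoint closures in $V_l\cap D.$ But the nowhere density of $E$ together with the finiteness $r_l\leqslant m$ forces every $z\in E\cap(V_l\cap D)$ to lie in the closure of some $U_i^{(l)}$ (arbitrarily close to $z$ one finds points of $(V_l\cap D)\setminus E,$ and by pigeonholing a single $U_i^{(l)}$ is approached infinitely often), so $V_l\cap D=\overline{W_1}\cup\overline{W_2}$ would split the connected set $V_l\cap D,$ a contradiction. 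Hence $G_l$ is a connected graph on at most $m$ vertices; pick a shortest path $U_{a_l}^{(l)}=:V_0,V_1,\ldots,V_s:=U_{b_l}^{(l)}$ in $G_l$ with $s\leqslant m-1$ and edge points $z_i\in\overline{V_i}\cap\overline{V_{i+1}}\cap E.$ At each $z_i,$ the finite connectedness of $D$ on $E,$ applied to a nested family of shrinking neighborhoods $\widetilde V_n$ of $z_i,$ provides nested components of $(D\cap\widetilde V_n)\setminus E$ accumulating at $z_i$ and lying inside $V_i$ (respectively inside $V_{i+1}$); concatenating these nested components yields ``approach arcs'' ending at $z_i$ in $V_i\cup\{z_i\}$ and $V_{i+1}\cup\{z_i\}.$ Joining the successive pairs of approach arcs through paths inside the open connected sets $V_0,V_1,\ldots,V_s$ produces the desired $\gamma_l,$ which touches $E$ only at $z_0,\ldots,z_{s-1}$ and thus contains at most $s\leqslant m-1$ points in $E.$

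The chief obstacle is precisely this last construction: upgrading the local finite connectedness hypothesis at each $z_i$ into an honest rectifiable arc terminating at $z_i$ on both sides. Both the connectedness of $G_l$ and the bound $s\leqslant m-1$ on its diameter are comparatively routine once the adjacency graph has been set up, as is the joining of approach arcs inside each open connected set $V_i,$ which follows from arc-connectedness of open connected subsets of~${\Bbb R}^n.$
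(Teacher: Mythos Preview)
Your argument is correct and follows a genuinely different route from the paper's. The paper starts from an arbitrary path in $V_k\cap D$ joining $x_k$ to $y_k$ and then \emph{rectifies} it step by step: it lets $t_1$ be the last time the path lies in $\overline{K_1}$, shows the resulting point $z_1$ is in $E$ and in the boundary of a new component $K_2$, invokes Lemma~3.10 of \cite{Vu} to replace $\gamma_k|_{[0,t_1]}$ by an arc in $K_1$ terminating at $z_1$, and iterates. Because $t_1=\sup\{t:\gamma_k(t)\in\overline{K_1}\}$ forces $\gamma_k(t)\notin\overline{K_1}$ for all $t>t_1$, the components $K_1,K_2,\ldots$ encountered are pairwise distinct, which gives the bound $m-1$ on the number of transition points.

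Your approach is more structural: you pass directly to the adjacency graph $G_l$ on the components of $(V_l\cap D)\setminus E$, prove it is connected, and take a shortest path in it. The bound $m-1$ then drops out of the trivial fact that a connected graph on at most $m$ vertices has diameter at most $m-1$. This is cleaner and makes the combinatorics transparent, whereas the paper's inductive rectification keeps the argument closer to the geometry of a single path. Both proofs bottleneck at exactly the same technical point---producing an arc inside a component $V_i$ that lands at a prescribed boundary point $z_i\in E$---and your nested-components sketch is precisely the content of Lemma~3.10 in \cite{Vu} that the paper cites: choose $\widetilde V_n\downarrow\{z_i\}$ with $(D\cap\widetilde V_n)\setminus E$ having finitely many components, use pigeonhole to find one contained in $V_i$ with $z_i$ in its closure, pick $p_n$ in it, and concatenate paths $p_n\to p_{n+1}$ inside the $n$-th component. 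One small remark: you do not need the approach arcs to be rectifiable, only continuous, so the word ``rectifiable'' in your final paragraph is an unnecessary strengthening.
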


\medskip
\begin{proof}
From the conditions of the lemma it follows that there exists a
sequence of neighborhoods $V_k\subset B(x_0, 2^{\,-k}),$
$k=1,2,\ldots ,$ such that $V_k\cap D$ is connected and $(V_k\cap
D)\setminus E$ consists at most of $m$ components, $1\leqslant
m<\infty.$ Now, there are subsequences $x_{k_l}$ and $y_{k_l}$ which
belong to $V_l\cap D.$ To simplify the notation, we will assume that
the sequences $x_k$ and $y_k$ themselves satisfy this condition,
i.e., $x_k, y_k\in V_k,$ $k=1,2,\ldots .$ Since $V_k\cap D$ is
connected, there is a path $\gamma_k:[0, 1]\rightarrow V_k$ such
that $\gamma_k(0)=x_k$ and $\gamma_k(1)=y_k.$

Let $K_1$ be a component of $(V_k\cap D)\setminus E$ containing
$x_k.$ If $y_k\subset K_1,$ the proof  is complete. In the contrary
case, $|\gamma_k|\cap (D\setminus K_1)\ne \varnothing.$ Let us to
show that, in this case,
\begin{equation}\label{eq4}
|\gamma_k|\cap (D\setminus \overline{K_1})\ne \varnothing\,.
\end{equation}
Indeed, since $y_k\in (V_k\cap D)\setminus E,$ there is a component
$K_*$ of $(V_k\cap D)\setminus E$ such that $y_k\in K_*.$ Observe
that $y_k\not \in \overline{K_1}.$ Indeed, in the contrary case
there is a sequence $z_s\in K_1,$ $s=1,2,\ldots ,$ such that
$z_s\rightarrow y_k$ as $s\rightarrow\infty.$ But all components of
$(V_k\cap D)\setminus E$ are closed into itself and disjoint (see,
e.g., \cite[Theorem~1.5.III]{Ku}). Thus $y_k\in K_1,$ as well. It is
possible only if $K_1=K_*,$ that contradicts the assumption
mentioned above. Therefore, the relation~(\ref{eq4}) holds, as
required.

\medskip
By~(\ref{eq4}), there is $t_1\in (0, 1)$ such that
\begin{equation}\label{eq5A}
t_1=\sup\limits_{t\geqslant 0: \gamma_k(t)\in \overline{K_1}}t\,.
\end{equation}
Set
\begin{equation}\label{eq6B}
z_1=\gamma_k(t_1)\in V_k\cap D\,.
\end{equation}
Observe that $z_1\in E.$ Indeed, in the contrary case there is a
component $K_{**}$ of $(V_k\cap D)\setminus E$ such that $z_1\in
K_{**}.$ Since $K_1$ is closed, $K_{**}=K_1.$ Since $K_1$ is open
set (see~\cite[Theorem~4.6.II]{Ku}), there is $t^{\,*}_{1}>t_1$ such
that a path $\gamma_k|_{[t_1, t^{\,*}_{1}]}$ belongs to $K_1$ yet.
But this contradicts with the definition of $t_1$ in~(\ref{eq5A}).

\medskip
Let us to show that, there is another component $K_2$ of $(V_k\cap
D)\setminus E$ such that $z_1\in\partial K_2.$ Indeed, the points
$\gamma_k(t),$ $t>t_1,$ do not belong to $\overline{K_1},$ so that
there is a sequence $z^l_1\in (V_k\cap D)\setminus K_1,$
$l=1,2\ldots,$ such that $z^l_1\rightarrow z_1$ as
$l\rightarrow\infty.$ Since $E$ is nowhere dense in $D,$ $V_k\cap D$
is open and $z_1\in V_k\cap D,$ we may consider that $z^l_1\in
(V_k\cap D)\setminus E$ for any $l\in {\Bbb N}.$ Since there are $m$
components of $(V_k\cap D)\setminus E,$ we may choose a component
$K_2$ of them which contains infinitely many elements of the
sequence $z^l_1.$ Without loss of generality, passing to a
subsequence, if need, we may consider that all elements $z^l_1\in
K_2,$ $l=1,2,\ldots .$ Thus, $z_1\in \partial K_2,$ as required.

Observe that, any component $K$ of $(V_k\cap D)\setminus E$ is
finitely connected at any point $z_0\in \partial K.$ Indeed, since
$D$ is finitely connected on $E,$ for any neighborhood
$\widetilde{U}$ of $z_0$ there is a neighborhood
$\widetilde{V}\subset \widetilde{U}$ of $z_0$ such that $(D\cap
\widetilde{V})\setminus E$ consists of finite number of components.
Now, $(K\cap \widetilde{V})\setminus E$ consists of finite number of
components, as well.

Now, due to Lemma~3.10 in \cite{Vu} we may replace $\gamma_k|_{[0,
t_1]}$ by a path belonging to $K_1$ for any $t\in [0, t_1)$ and
tending to $z_1$ as $t\rightarrow t_1-0.$ (Here $z_1$ is defined
in~(\ref{eq6B})). In order to simplify the notation, without
limiting the generality of the reasoning, we may assume that the
path $\gamma_k|_{[0, t_1]}$ already has the indicated property. Now,
there are two cases:

\medskip
a) $y_k\in K_2.$ Then we join the points $y_k$ and $z_1$ by a path
$\alpha_1:(t_1, 1]\rightarrow K_2$ belonging to $K_2$ and tending to
$z_1$ as $t\rightarrow t_1+0.$ This is possible due to finitely
connectedness of $K_2$ proved above and by Lemma~3.10 in \cite{Vu}.
Uniting paths $\gamma_k|_{[0, t_1]}$ and $\alpha_1,$ we obtain the
desired path. In particular, only one point $z_1$ belonging to this
path also belongs to $E.$

\medskip
b) $y_k\not\in K_2.$ Arguing similarly to mentioned above, we may
prove that there is $t_2\in (t_1, 1)$ such that
\begin{equation}\label{eq7D}
t_2=\sup\limits_{t\geqslant t_1: \gamma_k(t)\in \overline{K_2}}t\,.
\end{equation}
Reasoning similarly to the case with point $t_1,$ we may show that:
$$(b_1)\qquad z_2=\gamma_k(t_2)\in (V_k\cap D)\cap E\,,$$

\noindent $(b_2)$\qquad there is another component $K_3$ of
$(V_k\cap D)\setminus E,$ $K_1\ne K_3\ne K_2$ such that
$z_2\in\partial K_3,$

\noindent $(b_3)$\qquad due to Lemma~3.10 in \cite{Vu} we may
replace $\gamma_k|_{[t_1, t_2]}$ by a path belonging to $K_2$ for
any $t\in (t_1, t_2),$ tending to $z_1$ as $t\rightarrow t_1+0$ and
tending to $z_2$ as $t\rightarrow t_2-0.$ In order to simplify the
notation, without limiting the generality of the reasoning, we may
assume that the path $\gamma_k|_{[t_1, t_2]}$ already has the
indicated property.

\medskip
Now, there are two cases:

\medskip
a) $y_k\in K_3.$ Then we join the points $y_k$ and $z_3$ by a path
$\alpha_2:(t_2, 1]\rightarrow K_3$ belonging to $K_3$ and tending to
$z_2$ as $t\rightarrow t_2+0.$ This is possible due to finitely
connectedness of $K_3$ proved above and by Lemma~3.10 in \cite{Vu}.
Uniting paths $\gamma_k|_{[0, t_2]}$ and $\alpha_2,$ we obtain the
desired path. In particular, precisely two points $z_1$ and $z_2$
belonging to this path also belong to $E.$

\medskip
b) $y_k\not\in K_2.$ Arguing similarly to mentioned above, we may
prove that there is $t_3\in (t_2, 1)$ such that
\begin{equation}\label{eq7E}
t_3=\sup\limits_{t\geqslant t_2: \gamma_k(t)\in \overline{K_3}}t\,.
\end{equation}
And so on. Continuing this process, we will obtain a certain number
of points $z_1=\gamma_k(t_1), z_2=\gamma_k(t_2),
z_3=\gamma_k(t_3),\ldots ,
z_{k_{\widetilde{p}-1}}=\gamma_k(t_{\widetilde{p}-1})$ in $E$ and a
certain number of components $K_1, K_2, \ldots, K_{\widetilde{p}}$
in $(V_k\cap D)\setminus E,$ $K_1\ne K_2\ne\ldots\ne
K_{\widetilde{p}}.$ The corresponding path $\gamma_k|_{[0,
t_{\widetilde{p}-1}]}$ is a part of a path $\gamma_k$ which joins
the point $x_k$ and $z_{k_{\widetilde{p}-1}}\in K_{\widetilde{p}}$
in $V_k\cap D$ and such that
$$|\gamma_k|_{[0, t_{k_{\widetilde{p}-1}}]}|\cap E=\{z_1, z_2,
z_3,\ldots , z_{k_{\widetilde{p}-1}}\}\,.$$
Since at each step the remaining part of the path does not belong to
the union of the closures of the previous components $K_1, K_2,
\ldots, K_{\widetilde{p}-1},$ and there are only a finite number of
these components does not exceed $m,$ then $y_k\in
K_{\widetilde{p}}$ for some $1\leqslant \widetilde{p}\leqslant m.$
Then we join the points $y_k$ and $z_{\widetilde{p}-1}$ by a path
$\alpha_{\widetilde{p}-1}:(t_{\widetilde{p}-1}, 1]\rightarrow
K_{\widetilde{p}}$ belonging to $K_{\widetilde{p}}$ and tending to
$z_{\widetilde{p}-1}$ as $t\rightarrow t_{\widetilde{p}-1}+0.$ This
is possible due to finitely connectedness of $K_{\widetilde{p}}$
proved above and by Lemma~3.10 in \cite{Vu}. Uniting paths
$\gamma_k|_{[0, t_{\widetilde{p}-1}]}$ and
$\alpha_{\widetilde{p}-1},$ we obtain the desired path. In
particular, only the points $z_1, z_2,\ldots, z_{\widetilde{p}-1}$
belonging to this path also belongs to $E.$ The number of these
points does not exceed $m-1.$ Lemma is proved.~$\Box$
\end{proof}

\section{Main Lemma}

Let $D\subset {\Bbb R}^n,$ $f:D\rightarrow {\Bbb R}^n$ be a discrete
open mapping, $\beta: [a,\,b)\rightarrow {\Bbb R}^n$ be a path, and
$x\in\,f^{\,-1}(\beta(a)).$ A path $\alpha: [a,\,c)\rightarrow D$ is
called a {\it maximal $f$-lifting} of $\beta$ starting at $x,$ if
$(1)\quad \alpha(a)=x\,;$ $(2)\quad f\circ\alpha=\beta|_{[a,\,c)};$
$(3)$\quad for $c<c^{\prime}\leqslant b,$ there is no a path
$\alpha^{\prime}: [a,\,c^{\prime})\rightarrow D$ such that
$\alpha=\alpha^{\prime}|_{[a,\,c)}$ and $f\circ
\alpha^{\,\prime}=\beta|_{[a,\,c^{\prime})}.$ Here and in the
following we say that a path $\beta:[a, b)\rightarrow
\overline{{\Bbb R}^n}$ converges to the set $C\subset
\overline{{\Bbb R}^n}$ as $t\rightarrow b,$ if $h(\beta(t),
C)=\sup\limits_{x\in C}h(\beta(t), C)\rightarrow 0$ at $t\rightarrow
b.$ The following is true (see~\cite[Lemma~3.12]{MRV}).

\medskip
\begin{proposition}\label{pr3}
{\it\, Let $f:D\rightarrow {\Bbb R}^n,$ $n\geqslant 2,$ be an open
discrete mapping, let $x_0\in D,$ and let $\beta: [a,\,b)\rightarrow
{\Bbb R}^n$ be a path such that $\beta(a)=f(x_0)$ and such that
either $\lim\limits_{t\rightarrow b}\beta(t)$ exists, or
$\beta(t)\rightarrow \partial f(D)$ as $t\rightarrow b.$ Then
$\beta$ has a maximal $f$-lifting $\alpha: [a,\,c)\rightarrow D$
starting at $x_0.$ If $\alpha(t)\rightarrow x_1\in D$ as
$t\rightarrow c,$ then $c=b$ and $f(x_1)=\lim\limits_{t\rightarrow
b}\beta(t).$ Otherwise $\alpha(t)\rightarrow \partial D$ as
$t\rightarrow c.$}
\end{proposition}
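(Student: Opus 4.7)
The plan is to run a standard maximal-extension argument, whose technical backbone is the existence of \emph{normal neighborhoods} for open discrete mappings: at every $x\in D$ one can choose an arbitrarily small connected open neighborhood $U$ with $f^{\,-1}(f(x))\cap\overline{U}=\{x\}$ while $f(U)$ is a neighborhood of $f(x).$ First I would establish a local lifting lemma: if $U$ is a normal neighborhood of $x$ and $\beta^{\,\prime}:[a^{\,\prime},b^{\,\prime}]\rightarrow f(U)$ is a path with $\beta^{\,\prime}(a^{\,\prime})=f(x),$ then for some $\tau>a^{\,\prime}$ there is an $f$-lifting $\alpha^{\,\prime}:[a^{\,\prime},\tau)\rightarrow U$ of $\beta^{\,\prime}|_{[a^{\,\prime},\tau)}$ with $\alpha^{\,\prime}(a^{\,\prime})=x;$ this follows from openness and discreteness of $f$ together with a connectedness argument inside $U.$ Using it, I would let $\mathcal F$ denote the family of all $f$-liftings $\alpha:[a,c^{\,\prime})\rightarrow D$ of $\beta|_{[a,c^{\,\prime})}$ starting at $x_0,$ partially ordered by extension. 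Discreteness of $f$ forces any two members of $\mathcal F$ to agree on their common interval of definition, so every chain has the natural upper bound given by taking unions, and Zorn's lemma produces a maximal $\alpha:[a,c)\rightarrow D,$ the candidate maximal lifting.

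Next, suppose that $\alpha(t)\rightarrow x_1\in D$ as $t\rightarrow c-0.$ Continuity of $f$ gives $\beta(t)=f(\alpha(t))\rightarrow f(x_1),$ whence $f(x_1)=\lim\limits_{t\rightarrow b}\beta(t)$ provided $c=b.$ If instead $c<b,$ I would choose a normal neighborhood $U$ of $x_1$ and apply the local lifting lemma to extend $\alpha$ strictly past $c$ along the remaining portion of $\beta,$ in direct contradiction to the maximality of $\alpha.$ Therefore $c=b,$ and both assertions of the proposition hold in this case.

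The main obstacle is the opposite alternative, in which $\alpha(t)$ has no limit in $D$ as $t\rightarrow c.$ In this case I would show that $\alpha(t)\rightarrow \partial D,$ equivalently, that the cluster set of $\alpha$ at $c$ is disjoint from $D.$ Suppose, for contradiction, that some $x_1\in D$ lies in this cluster set, so that $\alpha(t_k)\rightarrow x_1$ for a sequence $t_k\rightarrow c.$ Then $\beta(t_k)\rightarrow f(x_1);$ when $c<b$ this identifies $\beta(c)=f(x_1),$ and when $c=b$ it forces $f(x_1)$ to be the hypothesized limit of $\beta.$ Fix a normal neighborhood $U$ of $x_1$ so small that $f^{\,-1}(f(x_1))\cap\overline{U}=\{x_1\}.$ I would then argue that $\alpha(t)\in U$ for all $t$ sufficiently close to $c:$ otherwise there would exist $s_k\rightarrow c$ with $\alpha(s_k)\in\partial U,$ and compactness of $\partial U$ together with continuity of $f$ would produce a point $x_2\in\overline{U}\setminus\{x_1\}$ with $f(x_2)=f(x_1),$ contradicting the choice of $U.$ Once $\alpha$ is trapped in $U$ near $c,$ the local uniqueness of $f$-liftings inside $U$ (again a consequence of normality) promotes the subsequential convergence along $t_k$ to genuine convergence $\alpha(t)\rightarrow x_1$ as $t\rightarrow c,$ reducing the situation to the previous case and yielding the required contradiction. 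The delicate point throughout is precisely this promotion: ruling out the possibility that the multi-valued nature of $f^{\,-1}$ allows $\alpha$ to oscillate between several preimages of the same point as $t\rightarrow c.$
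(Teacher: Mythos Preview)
The paper does not prove this proposition; it merely quotes it from \cite[Lemma~3.12]{MRV}. Your outline is essentially the classical argument found there (and in Rickman's monograph), so there is nothing to compare at the level of strategy.

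That said, two of your justifications invoke a \emph{uniqueness} of $f$-liftings that open discrete maps simply do not enjoy. First, the sentence ``Discreteness of $f$ forces any two members of $\mathcal F$ to agree on their common interval of definition'' is false: take $f(z)=z^{2}$ on the unit disk, $x_0=0$, and $\beta(t)=t$; the two liftings $\pm\sqrt{t}$ both start at $0$. Fortunately the Zorn step does not need this: your partial order is extension, so members of a \emph{chain} agree on common intervals by definition, and the union of a chain is an upper bound. Second, and more seriously, the phrase ``the local uniqueness of $f$-liftings inside $U$ (again a consequence of normality)'' is not a valid principle at branch points and should not be the engine of the final step. The correct argument is already implicit in what you wrote just before: once $\alpha$ is trapped in $U$ for $t$ near $c$, every cluster point $x_{*}$ of $\alpha$ at $c$ lies in $\overline{U}\subset D$ and satisfies $f(x_{*})=\lim_{t\rightarrow c}\beta(t)=f(x_1)$, hence $x_{*}\in\overline{U}\cap f^{-1}(f(x_1))=\{x_1\}$ by the defining property of the normal neighborhood. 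Thus the cluster set is the single point $\{x_1\}$ and $\alpha(t)\rightarrow x_1$; no uniqueness of liftings is needed. (Equivalently: the cluster set is compact and connected, and a connected subset of the discrete fibre $f^{-1}(f(x_1))$ that meets $\overline{U}$ must reduce to $\{x_1\}$.) With these two repairs your sketch is the standard proof.
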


\medskip
Versions of the following lemma have been repeatedly proven by
different authors, including the second co-author of this work in
the situation where the maps are homeomorphisms or open discrete
closed maps, see, for example, \cite[Lemma~2.1]{IR},
\cite[Lemma~5.16]{MRSY$_1$}, \cite[Theorem~4.6,
Theorem~13.1]{MRSY$_2$}, \cite[Theorrem~5.1]{RS}, \cite{Sev$_3$} and
\cite[Theorem~1]{Sm}. For quasiconformal mappings, results of this
kind were proved in~\cite[Theorem~17.15]{Va}
and~\cite[Section~3]{Na$_1$}. For closed quasiregular mappings it
was proved by Vuorinen and Srebro with some differences in the
formulation, see, for example, \cite[Theorem~4.10.II]{Vu} and
cf.~\cite[Theorem~4.2]{Sr}. Apparently, in the situation where the
mappings are not closed, this lemma is published for the first time,
even for quasiregular mappings.

\medskip
\begin{lemma}\label{lem1} {\it Let $p\geqslant 1,$
Let $D$ and $D^{\,\prime}$ be domains in ${\Bbb R}^n,$ $n\geqslant
2,$ $f:D\rightarrow D^{\,\prime}$ be an open discrete mapping
satisfying relations~(\ref{eq2*!A})--(\ref{eqA2}) at the point $b\in
\partial D,$ $b\ne\infty,$ with respect to $p$-modulus, $b\ne \infty,$
$f(D)=D^{\,\prime}.$

\medskip
In addition, assume that

\medskip
1) the set
$$E:=f^{\,-1}(C(f, \partial
D))$$
is nowhere dense in $D$ and $D$ is finitely connected on $E;$

\medskip
2) for any neighborhood $U$ of $x_0$ there is a neighborhood
$V\subset U$ of $x_0$ such that:

\medskip
2a) $V\cap D$ is connected,

\medskip
2b) $(V\cap D)\setminus E$ consists at most of $m$ components,
$1\leqslant m<\infty,$

\medskip
3) $D^{\,\prime}\setminus C(f, \partial D)$ consists of finite
components, each of them has a strongly accessible boundary with
respect to $p$-modulus.

Suppose that there is $\varepsilon_0=\varepsilon_0(b)>0$ and some
positive measurable function $\psi:(0, \varepsilon_0)\rightarrow
(0,\infty)$ such that
\begin{equation}\label{eq7***}
0<I(\varepsilon,
\varepsilon_0)=\int\limits_{\varepsilon}^{\varepsilon_0}\psi(t)\,dt
< \infty
\end{equation}
for sufficiently small $\varepsilon\in(0, \varepsilon_0)$ and, in
addition,
\begin{equation}\label{eq5***}
\int\limits_{A(\varepsilon, \varepsilon_0, b)}
Q(x)\cdot\psi^{\,p}(|x-b|)
 \ dm(x) =o(I^p(\varepsilon, \varepsilon_0))\,,
\end{equation}
where $A:=A(b, \varepsilon, \varepsilon_0)$ is defined
in~(\ref{eq1**}). Then $f$ has a continuous extension to $b.$}
\end{lemma}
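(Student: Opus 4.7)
The plan is to argue by contradiction. Assume $f$ has no continuous extension to $b$; then the cluster set $C(f,b)$ contains at least two distinct points $y^*, y^{**}$, and I can pick sequences $x_k, y_k \to b$ in $D$ with $f(x_k) \to y^*$ and $f(y_k) \to y^{**}$. Since $E$ is nowhere dense in $D$, I may perturb to assume $x_k, y_k \in D \setminus E$; note that $y^*, y^{**} \in C(f, \partial D)$ automatically. Applying Lemma~\ref{lem2} I pass to subsequences and obtain paths $\gamma_l \subset V_l \subset B(b, 2^{-l})$ joining $x_{k_l}$ and $y_{k_l}$, each meeting $E$ in at most $m-1$ points. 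Consequently $f \circ \gamma_l$ splits into at most $m$ image sub-arcs, each lying (apart from endpoints in $C(f,\partial D)$) in one of the finitely many components $\Omega_1, \ldots, \Omega_N$ of $D^{\,\prime} \setminus C(f, \partial D)$.

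Next, I fix a component $\Omega$ whose closure contains $y^*$; by pigeonhole over the finite collection $\Omega_1, \ldots, \Omega_N$, I may take $\Omega$ to be the component containing the first image sub-arc of $f \circ \gamma_l$ (the one starting at $f(x_{k_l})$) independent of $l$. Choose a neighborhood $U$ of $y^*$ with $y^{**} \notin \overline{U}$. Strong accessibility of $\partial \Omega$ at $y^*$ supplies a neighborhood $W \subset U$ of $y^*$, a compact $E_* \subset \Omega$, and $\delta > 0$ such that $M_p(\Gamma(E_*, F, \Omega)) \geqslant \delta$ for every continuum $F \subset \Omega$ intersecting both $\partial U$ and $\partial W$. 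Since $f(x_{k_l}) \in W$ and $f(y_{k_l}) \notin \overline{U}$ for $l$ large, tracking the concatenation of the image sub-arcs that lie in $\Omega$ I extract a continuum $F_l \subset \Omega$ meeting both $\partial W$ and $\partial U$, yielding $M_p(\Gamma(E_*, F_l, \Omega)) \geqslant \delta$ for all large $l$.

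The final step lifts this path family through $f$ via Proposition~\ref{pr3}. Because $E_* \subset \Omega$ is compact and disjoint from $C(f, \partial D)$, its pre-image $f^{-1}(E_*)$ is compact in $D$, hence contained in $D \setminus \overline{B(b, \varepsilon_0)}$ for some $\varepsilon_0 > 0$, which I may shrink so that (\ref{eq7***}) and (\ref{eq5***}) hold. For $l$ with $2^{-l} < \varepsilon_0$, the maximal $f$-liftings of paths in $\Gamma(E_*, F_l, \Omega)$ start in $f^{-1}(E_*)$ and reach $\gamma_l \subset B(b, 2^{-l})$, so the lifted family lies in $\Gamma(S(b, 2^{-l}), S(b, \varepsilon_0), D)$. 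Applying the weighted Poletsky inequality (\ref{eq2*!A})--(\ref{eqA2}) with $\eta(r) = \psi(r)/I(2^{-l}, \varepsilon_0)$ on $(2^{-l}, \varepsilon_0)$ (which by (\ref{eq7***}) satisfies (\ref{eqA2})) together with (\ref{eq5***}) shows that the right-hand side is $o(1)$ as $l \to \infty$, contradicting the lower bound $\delta$.

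The main obstacle will be the second step: ensuring that one can produce the continuum $F_l$ inside a single fixed component $\Omega$ that simultaneously crosses $\partial W$ and $\partial U$, despite the image path passing through up to $m-1$ points of $C(f, \partial D)$ and potentially jumping between different components of $D^{\,\prime} \setminus C(f, \partial D)$. Both the finite connectedness of $D$ on $E$ (via Lemma~\ref{lem2}) and the strong accessibility of each component's boundary are consumed here. Once $F_l$ is in hand, the maximal-lifting argument and the resulting weighted Poletsky bound are essentially a weighted version of the classical modulus contradiction.
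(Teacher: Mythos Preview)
Your overall strategy matches the paper's, but the step you yourself flag as the ``main obstacle'' is a genuine gap that the outline does not close. Fixing $\Omega$ as the component containing the \emph{first} image sub-arc and invoking strong accessibility at $y^*$ does not produce the continuum $F_l$: that first sub-arc starts at $f(x_{k_l})\in W$ but may terminate at its break point $b_1\in C(f,\partial D)$ while still inside $U$, and the crossing of $\partial U$ may occur only in a later sub-arc lying in a different component. ``Tracking the concatenation of the image sub-arcs that lie in $\Omega$'' does not help either, since those pieces are separated by points of $C(f,\partial D)\subset\partial\Omega$ and need not form a connected set in $\Omega$. The paper's fix is the idea you are missing: from $h(f(x_k),f(y_k))\geqslant\delta$ and the triangle inequality over the at most $m-1$ break points, \emph{some} sub-arc $G_k$ already has chordal diameter at least $\delta/(m-1)$. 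This long sub-arc lies in a single component; a further pigeonhole puts all $G_k$ (after a subsequence) in one fixed component $K$, and strong accessibility is applied not at $y^*$ but at the limit $w_0\in\partial K$ of the endpoints of the $G_k$. The neighbourhood $U$ of $w_0$ is then chosen with $h(U)<\delta/2(m-1)$, so $G_k$ is forced to cross $\partial U$ by size alone.

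Two smaller points. First, $f^{-1}(E_*)$ need not be compact in $D$, since $f$ is not assumed proper; what you actually need (and what holds) is that $f^{-1}(E_*)$ stays at positive distance from $\partial D$: if $z_k\in f^{-1}(E_*)$ accumulated on $\partial D$, any cluster value of $f(z_k)$ would lie both in $C(f,\partial D)$ and in the closed set $E_*\subset\Omega$, a contradiction. Second, your lifting is in the wrong direction. If you lift from a point of $f^{-1}(E_*)$, the terminal point of a complete lift lies in $f^{-1}(F_l)$, but nothing forces it onto $\gamma_l$, so you cannot conclude the lift ends in $B(b,2^{-l})$. The paper lifts from $|\gamma_l|$ (possible because $F_l\subset f(|\gamma_l|)$), so the lift starts in $B(b,2^{-l})$ and, once shown complete, ends in $f^{-1}(E_*)\subset D\setminus B(b,r_0)$.
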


\begin{proof}
Suppose the opposite. Due to the compactness of $\overline{{\Bbb
R}^n},$ then there are at least two sequences $x_k,$ $y_k\in D,$
$i=1,2,\ldots,$ such that $x_k,$ $y_k\in D,$ $k=1,2,\ldots,$
$x_k\rightarrow b,$ $y_k\rightarrow b$ as $k\rightarrow \infty,$ and
$f(x_k)\rightarrow y,$ $f(y_k)\rightarrow y^{\,\prime}$ as
$k\rightarrow \infty,$ while $y^{\,\prime}\ne y,$ see
Figure~\ref{fig1}.
\begin{figure}[h]
\centerline{\includegraphics[scale=0.5]{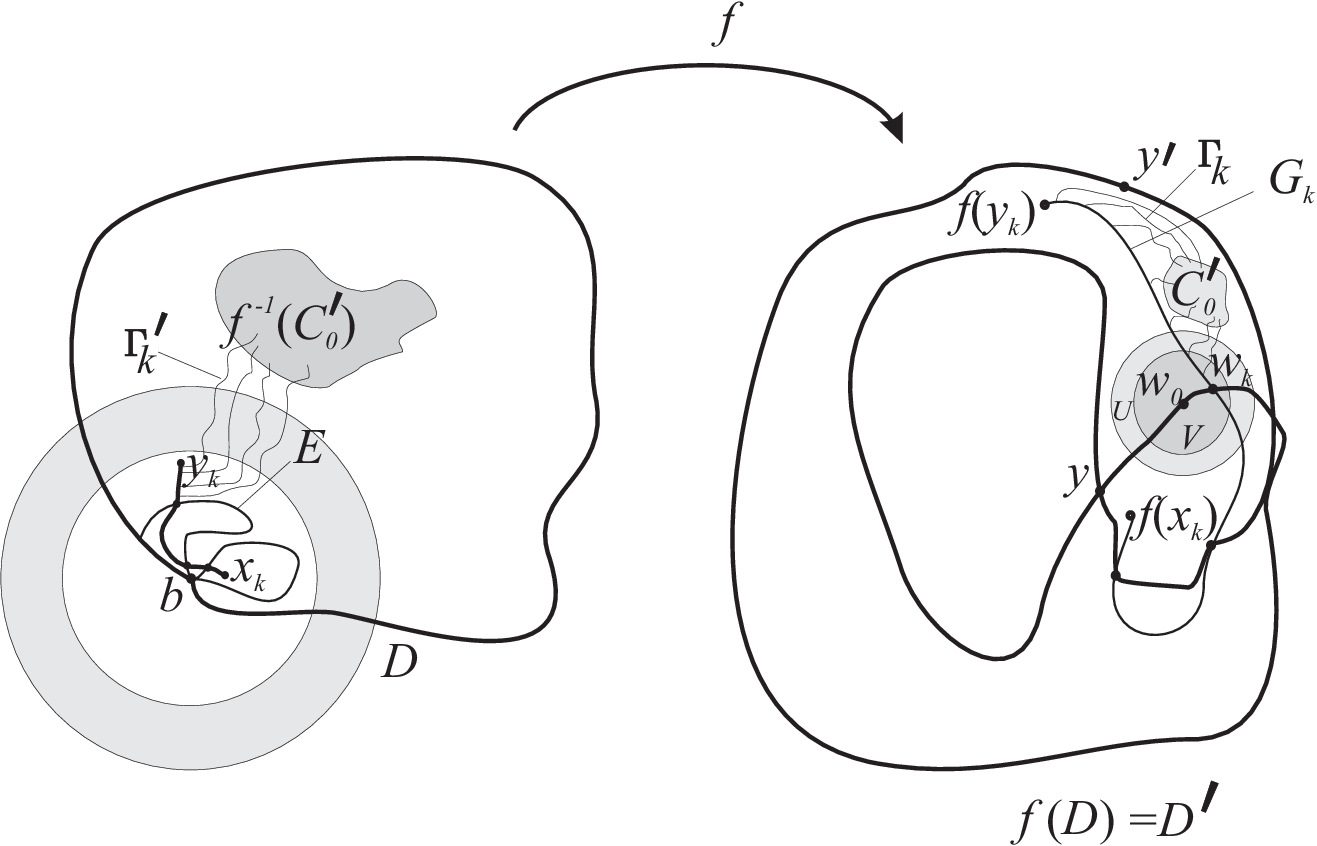}} \caption{To
the proof of Theorem~\ref{th3}}\label{fig1}
\end{figure}
In particular,
\begin{equation}\label{eq1B}
h(f(x_k), f(y_k))\geqslant \delta>0
\end{equation}
for some $\delta>0$ and all $k\in {\Bbb N},$ where $h$ is chordal
(spherical) metric.
By the assumption~2), there exists a sequence of neighborhoods
$V_k\subset B(b, 2^{\,-k}),$ $k=1,2,\ldots ,$ such that $V_k\cap D$
is connected and $(V_k\cap D)\setminus E$ consists at most of $m$
components, $1\leqslant m<\infty.$

We note that the points $x_k$ and $y_k,$ $k=1,2,\ldots, $ may be
chosen such that $x_k, y_k\not\in E.$ Indeed, since under
condition~1) the set $E$ is nowhere dense in $D,$ there exists a
sequence $x_{ki}\in D\setminus E,$ $i=1,2,\ldots ,$ such that
$x_{ki}\rightarrow x_k$ as $i\rightarrow\infty.$ Put
$\varepsilon>0.$ Due to the continuity of the mapping $f$ at the
point $x_k,$ for the number $k\in {\Bbb N}$ there is a number
$i_k\in {\Bbb N}$ such that $h(f(x_{ki_k}), f(x_k))<\frac{1}{2^k}.$
So, by the triangle inequality
$$h(f(x_{ki_k}), y)\leqslant h(f(x_{ki_k}), f(x_k))+
h(f(x_k), y)\leqslant \frac{1}{2^k}+\varepsilon\,,$$
$k\geqslant k_0=k_0(\varepsilon),$ since $f(x_k)\rightarrow y$ as
$k\rightarrow\infty$ and by the choice of $x_k$ and $y.$ Therefore,
$x_k\in D$ may be replaced by $x_{ki_k}\in D\setminus E,$ as
required. We may reason similarly for the sequence $y_k.$

\medskip
Now, by Lemma~\ref{lem1} there are subsequences $x_{k_l}$ and
$y_{k_l},$ $l=1,2,\ldots ,$ belonging to some sequence of
neighborhoods $V_l,$ $l=1,2,\ldots ,$ of the point $x_0$ such that
${\rm diam\,}V_l\rightarrow 0$ as $l\rightarrow\infty$ and, in
addition, any pair $x_{k_l}$ and $y_{k_l}$ may be joined by a path
$\gamma_l$ in $V_l,$ where $\gamma_l$ contains at most $m-1$ points
in $E.$ Without loss of generality, we may assume that the same
sequences $x_k$ and $y_k$ satisfy properties mentioned above. Let
$\gamma_k:[0, 1]\rightarrow D,$ $\gamma_k(0)=x_k$ and
$\gamma_k(1)=y_k,$ $k=1,2,\ldots .$

\medskip
Observe that, the path $f(\gamma_k)$ contains not more than $m-1$
points in $C(f, \partial D).$ In the contrary case, there are at
least $m$ such points $b_{1}=f(\gamma_k(t_1)),
b_{2}=f(\gamma_k(t_2)),\ldots, b_m=f(\gamma_k(t_1)),$ $0\leqslant
t_1\leqslant t_2\leqslant \ldots\leqslant t_m\leqslant 1.$ But now
the points $a_{1}=\gamma_k(t_1), a_{2}=\gamma_k(t_2),\ldots,
a_m=\gamma_k(t_m)$ are in $E=f^{\,-1}(C(f, \partial D))$ and
simultaneously belong to $\gamma_k.$ This contradicts the definition
of $\gamma_k.$

\medskip
Let
$$b_{1}=f(\gamma_k(t_1)), b_{2}=f(\gamma_k(t_2))\quad,\ldots,\quad
b_l=f(\gamma_k(t_1))\,,$$ $$0\leqslant t_1\leqslant t_2\leqslant
\ldots\leqslant t_l\leqslant 1,\qquad 1\leqslant l\leqslant m-1\,,$$
be points in $f(\gamma_k)\cap C(f,
\partial D).$ By the relation~(\ref{eq1B}) and due to the triangle
inequality,
\begin{equation}\label{eq6A}
\delta\leqslant h(f(x_k), f(y_k))\leqslant\sum\limits_{k=1}^{l-1}
h(f(\gamma_k(t_k)), f(\gamma_k(t_{k+1}))\,.
\end{equation}
It follows from~(\ref{eq6A}) that, there is $1\leqslant
l=l(k)\leqslant m-1$ such that such that
\begin{equation}\label{eq7}
h(f(\gamma_k(t_{l(k)})), f(\gamma_k(t_{l(k)+1}))\geqslant
\delta/l\geqslant \delta/(m-1)\,.
\end{equation}
Observe that, the set $G_k:=|\gamma_k|_{(t_{l(k)}, t_{l(k)+1})}|$
belongs to $D^{\,\prime}\setminus C(f, \partial D),$ because it does
not contain any point in $E.$ Since $D^{\,\prime}\setminus C(f,
\partial D)$ consists only of finite components, there exists at
least one a component of $D^{\,\prime}\setminus C(f, \partial D),$
containing infinitely many components of $G_k.$ Without loss of
generality, going to a subsequence, if need, we may assume that all
$G_k$ belong to one component $K$ of $D^{\,\prime}\setminus C(f,
\partial D).$

\medskip
Due to the compactness of $\overline{{\Bbb R}^n},$ we may assume
that the sequence $w_k:=f(\gamma_k(t_{l(k)})),$ $k=1,2,\ldots, $
converges to some a point $w_0\in \overline{D^{\,\prime}}.$ Let us
to show that $w_0\in C(f, \partial D).$ Indeed, there are two cases:
either $w_k=f(\gamma_k(t_{l(k)}))\in C(f, \partial D)$ for
infinitely many $k,$ or $w_k\not\in E$ for infinitely many $k\in
{\Bbb N}.$ In the first case, the inclusion $w_0\in C(f, \partial
D)$ is obvious, because $C(f, \partial D)$ is closed. In the second
case, obviously, $w_k=f(x_k),$ but this sequence converges to $y\in
C(f, \partial D)$ by the assumption.

By the assumption, each component of the set $D^{\,\prime}\setminus
C(f, \partial D)$ has a strongly accessible boundary with respect to
$p$-modulus. In this case, for any neighborhood $U$ of the point
$w_0\in
\partial K$ there is a compact set $C_0^{\,\prime}\subset
D^{\,\prime},$ a neighborhood $V$ of a point $y,$ $V\subset U,$ and
a number $P>0$ such that
\begin{equation}\label{eq1}
M_p(\Gamma(C_0^{\,\prime}, F, K))\geqslant P
>0
\end{equation}
for any continua $F,$ intersecting $\partial U$ and $\partial V.$
Choose a neighborhood $U$ of $w_0$ with $h(U)< \delta/2(m-1),$ where
$\delta$ is from~(\ref{eq1B}). Let $C_0^{\,\prime}$ and $V$ be a
compact set and a neighborhood corresponding to $w_0.$

Observe that, $G_k$ contains some a continuum $\widetilde{G}_k$ in
$K$ intersecting $\partial U$ and $\partial V$ for sufficiently
large $k.$ Indeed, by the construction of $G_k,$ there is a sequence
of points $a_{s, k}:=f(\gamma_k(p_s))\rightarrow
w_k:=f(\gamma_k(t_{l(k)}))$ as $s\rightarrow \infty$ and $b_{s,
k}=f(\gamma_k(q_s))\rightarrow f(\gamma_k(t_{l(k)+1}))$ as
$s\rightarrow \infty,$ where $t_{l(k)})<
p_s<q_s<\gamma_k(t_{l(k)+1}))$ and $a_{s, k}, b_{s, k}\in K.$
By the triangle inequality and by~(\ref{eq7})
$$
h(a_{s, k}, b_{s, k})\geqslant h(b_{s, k}, w_k)-h(w_k, a_{s,
k})\geqslant
$$
\begin{equation}\label{eq11}
h(f(\gamma_k(t_{l(k)+1})), w_k)-h(f(\gamma_k(t_{l(k)+1})), b_{s,
k})-h(w_k, a_{s, k})\geqslant
\end{equation}
$$\delta/(m-1)-h(f(\gamma_k(t_{l(k)+1}), b_{s,
k}))-h(w_k, a_{s, k})\,.$$
Since $a_{s, k}:=f(\gamma_k(p_s))\rightarrow
w_k:=f(\gamma_k(t_{l(k)}))$ as $s\rightarrow \infty$ and $b_{s,
k}=f(\gamma_k(q_s))\rightarrow f(\gamma_k(t_{l(k)+1}))$ as
$s\rightarrow \infty,$ it follows from the last inequality that
there is $s=s(k)\in {\Bbb N}$ such that
\begin{equation}\label{eq12}
h(a_{s(k), k}, b_{s(k), k})\geqslant\delta/2(m-1)\,.
\end{equation}
Since $V$ is open, there is some neighborhood $U_k$ of $w_k$ such
$U_k\subset V.$ Since $a_{s, k}\rightarrow
w_k:=f(\gamma_k(t_{l(k)}))$ as $s\rightarrow \infty,$ we may assume
that $a_{s(k), k}\in V.$
Now, we set
$$\widetilde{G}_k:=f(\gamma_k)|_{[p_{s(k)}, q_{s(k)}]}\,.$$
In other words, $\widetilde{G}_k$ is a part of the path
$f(\gamma_k)$ between points $a_{s(k), k}$ and $b_{s(k), k}.$ Let us
to show that $\widetilde{G}_k$ intersects $\partial U$ and $\partial
V.$ Indeed, by the mentioned above, $a_{s(k), k}\subset V,$ so that
$V\cap \widetilde{G}_k\ne\varnothing.$ In particular, $U\cap
\widetilde{G}_k\ne\varnothing,$ because $V\subset U.$ On the other
hand, by (\ref{eq12}) $h(\widetilde{G}_k)\geqslant\delta/2(m-1),$
however, $h(U)< \delta/2(m-1)$ by the choice of $U.$ In particular,
$h(V)<\delta/2(m-1).$ It follows from this that
$$(\overline{{\Bbb R}^n}\setminus U)\cap
\widetilde{G}_k\ne\varnothing\,, \qquad(\overline{{\Bbb
R}^n}\setminus V)\cap \widetilde{G}_k\ne\varnothing\,.$$
Now, by \cite[Theorem~1.I.5, \S46]{Ku}
$$\partial U\cap
\widetilde{G}_k\ne\varnothing\,, \partial V\cap
\widetilde{G}_k\ne\varnothing\,,$$
as required.
Now, by~(\ref{eq1}),
\begin{equation}\label{eq1C}
M_p(\Gamma(\widetilde{G}_k, F, K))\geqslant P
>0\,, \qquad k=1,2,\ldots\,.
\end{equation}
Let us to show that, the relation~(\ref{eq1C}) contradicts with the
definition of $f$ in~(\ref{eq2*!A}) together with the
conditions~(\ref{eq7***})--(\ref{eq5***}). Indeed, let us denote by
$\Gamma_k$ the family of all half-open paths $\beta_k:[a,
b)\rightarrow \overline{{\Bbb R}^n}$ such that $\beta_k(a)\in
|f(\gamma_k)|,$ $\beta_k(t)\in K$ for all $t\in [a, b)$ and,
moreover, $\lim\limits_{t\rightarrow b-0}\beta_k(t):=B_k\in
C_0^{\,\prime}.$ Obviously, by~(\ref{eq1C})
\begin{equation}\label{eq4A}
M_p(\Gamma_k)=M_p(\Gamma(\widetilde{G}_k, F, K))\geqslant P
>0\,, \qquad k=1,2,\ldots\,.
\end{equation}
Consider the family $\Gamma_k^{\,\prime}$ of maximal $f$-liftings
$\alpha_k:[a, c)\rightarrow D$ of the family $\Gamma_k$ starting at
$|\gamma_k|;$ such a family exists by Proposition~\ref{pr3}.

Observe that, the situation when $\alpha_k\rightarrow \partial D$ as
$k\rightarrow\infty$ is impossible. Suppose the opposite: let
$\alpha_k(t)\rightarrow \partial D$ as $t\rightarrow c.$ Let us
choose an arbitrary sequence $\varphi_m\in [0, c)$ such that
$\varphi_m\rightarrow c-0$ as $m\rightarrow\infty.$ Since the space
$\overline{{\Bbb R}^n}$ is compact, the boundary $\partial D$ is
also compact as a closed subset of the compact space. Then there
exists $w_m\in
\partial D$ such that
\begin{equation}\label{eq7B}
h(\alpha_k(\varphi_m), \partial D)=h(\alpha_k(\varphi_m), w_m)
\rightarrow 0\,,\qquad m\rightarrow \infty\,.
\end{equation}
Due to the compactness of $\partial D$, we may assume that
$w_m\rightarrow w_0\in \partial D$ as $m\rightarrow\infty.$
Therefore, by the relation~(\ref{eq7B}) and by the triangle
inequality
\begin{equation}\label{eq8B}
h(\alpha_k(\varphi_m), w_0)\leqslant h(\alpha_k(\varphi_m),
w_m)+h(w_m, w_0)\rightarrow 0\,,\qquad m\rightarrow \infty\,.
\end{equation}
On the other hand,
\begin{equation}\label{eq9B}
f(\alpha_k(\varphi_m))=\beta_k(\varphi_m)\rightarrow \beta(c)
\,,\quad m\rightarrow\infty\,,
\end{equation}
because by the construction the path $\beta_k(t),$ $t\in [a, b],$
lies in $K\subset D^{\,\prime}\setminus C(f, \partial D)$ together
with its finite ones points. At the same time, by~(\ref{eq8B})
and~(\ref{eq9B}) we have that $\beta_k(c)\in C(f,
\partial D).$ The inclusions $\beta_k\subset D^{\,\prime}\setminus C(f, \partial D)$
and $\beta_k(c)\in C(f,
\partial D)$ contradict each other.

\medskip
Therefore, by Proposition~\ref{pr3} $\alpha_k\rightarrow x_1\in D$
as $t\rightarrow c-0,$ and $c_b$ and $f(\alpha_k(b))=f(x_1).$ In
other words, the $f$-lifting $\alpha_k$ is complete, i.e.,
$\alpha_k:[a, b]\rightarrow D.$ Besides that, it follows from that
$\alpha_k(b)\in f^{\,-1}(C^{\,\prime}_0).$

\medskip
Observe that, there is $r^{\,\prime}_0>0$ such that
\begin{equation}\label{eq13}
h(f^{\,-1}(C^{\,\prime}_0), \partial D)\geqslant r^{\,\prime}_0>0\,.
\end{equation}
In the contrary case, there is $z_k\in f^{\,-1}(C^{\,\prime}_0)$
such that $z_k\rightarrow z_0\in \partial D$ as
$k\rightarrow\infty.$ Now, due to the compactness of
$\overline{{\Bbb R}^n}$ the sequence $f(z_k)$ converges to some
$\omega_0$ as $k\rightarrow\infty.$ But, in this case, $\omega_0$ in
$C(f, \partial D)$ and simultaneously $\omega_0\in K\subset
D^{\,\prime}\setminus C(f, \partial D)$ because $f(z_k)\in
C^{\,\prime}_0\subset K$ and $C^{\,\prime}_0$ is closed. We have
obtained a contradiction, so that the relation~(\ref{eq13}) holds.

\medskip
Since $b\ne\infty,$ it follows from~(\ref{eq13}) that
\begin{equation}\label{eq14}
f^{\,-1}(C^{\,\prime}_0)\subset D\setminus B(b, r_0)\,.
\end{equation}
Let $k$ be such that $2^{\,-k}<\varepsilon_0.$ We may consider that
$\varepsilon_0<r_0.$ Due to~(\ref{eq14}), we may show that
\begin{equation}\label{eq15}
\Gamma_k^{\,\prime}>\Gamma(S(b, 2^{\,-k}), S(b, \varepsilon_0), D)
\end{equation}
(see \cite[Theorem~1.I.5, \S46]{Ku}). Observe that the function
$$\eta(t)=\left\{
\begin{array}{rr}
\psi(t)/I(2^{\,-k}, \varepsilon_0), &   t\in (2^{\,-k},
\varepsilon_0),\\
0,  &  t\in {\Bbb R}\setminus (2^{\,-k}, \varepsilon_0)\,,
\end{array}
\right. $$
where $I(2^{\,-k}, \varepsilon_0)$ is defined in~(\ref{eq7***}),
satisfies~(\ref{eqA2}) for $r_1:=2^{\,-k},$ $r_2:=\varepsilon_0.$
Therefore, by the definition of the mapping in~(\ref{eq2*!A}) and
by~(\ref{eq15}) we obtain that
\begin{equation}\label{eq11*}
M_p(f(\Gamma^{\,\prime}_k))\leqslant \Delta(k)\,,
\end{equation}
where $\Delta(k)\rightarrow 0$ as $k\rightarrow \infty.$ However,
$\Gamma_k=f(\Gamma_k^{\,\prime}).$ Thus, by~(\ref{eq11*}) we obtain
that
\begin{equation}\label{eq3}
M_p(\Gamma_k)= M_p(f(\Gamma_k^{\,\prime}))\leqslant
\Delta(k)\rightarrow 0
\end{equation}
as $k\rightarrow \infty.$ However, the relation~(\ref{eq3}) together
with the inequality~(\ref{eq4A}) contradict each other, which proves
the lemma. $\Box$
\end{proof}

\medskip
{\it Proof of Theorem~\ref{th3}} immediately follows by
Lemma~\ref{lem1} and Lemma~1.3 in~\cite{Sev$_4$}, excluding the
equality $f(\overline{D})=\overline{D^{\,\prime}}.$ The proof of the
latter repeats the last part of the proof of Theorem~3.1 in
\cite{SSD}.~$\Box$

\section{Some remarks on uniform domains}

A domain $D\subset {\Bbb R}^n,$ $n\geqslant 2,$ is called a {\it
uniform} domain with respect to $p$-modulus, $p\geqslant 1,$ if, for
each $r>0,$ there is $\delta>0$ such that $M_{p}(\Gamma(F, F^{\,*},
D))\geqslant\delta$ whenever $F$ and $F^{\,*}$ are continua of $D$
with $h(F)\geqslant r$ and $h(F^{\,*})\geqslant r.$ Domains $D_i,$
$i\in I,$ are said to be {\it equi-uniform} domains with respect to
$p$-modulus, if, for $r>0,$ the modulus condition above is satisfied
by each $D_i$ with the same number $\delta.$ It should be noted that
the proposed concept of a uniform domain has, generally speaking, no
relation to definition, introduced in~\cite{MSa}. Note that, uniform
domains with respect to $p$-modulus have strongly accessible
boundaries with respect to $p$-modulus, see
\cite[Remark~1]{SevSkv$_1$}.

\medskip
\begin{remark}\label{rem2}
The statement of Lemma~\ref{lem1} remains true if condition 3) in
this lemma is replaced by the conditions:

($3_\textbf{a}$) the family of components of $D^{\,\prime}\setminus
C(f, \partial D)$ is {\it equi-uniform} with respect to $p$-modulus
and, besides that,

(\textbf{$3_\textbf{b}$}) there is $\delta_*>0$ such that, in any
component $K$ of $D^{\,\prime}\setminus  C(f, \partial D)$ there is
a compactum $F\subset K$ such that $h(F)\geqslant \delta_*$ and
$h(f^{\,-1}(F),
\partial D)\geqslant \delta_*>0.$

\medskip
Indeed, just as in the proof of Lemma~\ref{lem1}, we will prove this
statement by contradiction. The proof up to relation (3.5) is
repeated verbatim. Further, just as in the proof of relations
(\ref{eq11})-(\ref{eq12}), it may be shown that there is a
subcontinuum~$\widetilde{G}_k$ in $G_k$ such that
$h(\widetilde{G}_k)\geqslant\delta/2(m-1).$

Let $K_k$ be a component of $D^{\,\prime}\setminus  C(f, \partial
D)$ containing $\widetilde{G}_k,$ and let $F_k:=F$ be compactum in
$K_k$ from the condition~$3_\textbf{b}.$ Due to the equi-uniformity
of $K_k$ (see the condition~$3_\textbf{b}$), there exists $P>0$ such
that
\begin{equation}\label{eq4B}
M_p(\Gamma_k)=M_p(\Gamma(\widetilde{G}_k, F_k, K_k))\geqslant P
>0\,, \qquad k=1,2,\ldots\,.
\end{equation}
Here $\Gamma_k$ denotes the family of all half-open paths
$\beta_k:[a, b)\rightarrow \overline{{\Bbb R}^n}$ such that
$\beta_k(a)\in |f(\gamma_k)|,$ $\beta_k(t)\in K$ for all $t\in [a,
b)$ and, moreover, $\lim\limits_{t\rightarrow b-0}\beta_k(t):=B_k\in
C_0^{\,\prime}.$

Consider the family $\Gamma_k^{\,\prime}$ of maximal $f$-liftings
$\alpha_k:[a, c)\rightarrow D$ of the family $\Gamma_k$ starting at
$|\gamma_k|;$ such a family exists by Proposition~\ref{pr3}.
Similarly to the proof of Lemma~\ref{lem1} we may prove that all
these liftings are complete, i.e., any path $\alpha_k:[a,
c)\rightarrow D$ is such that $c=b,$ $\alpha_k(b)\in f^{\,-1}(F_k).$
Due to the condition~$3_{\textbf{b}}$,
\begin{equation}\label{eq14A}
f^{\,-1}(K_k)\subset D\setminus B(b, r_0)\,.
\end{equation}
Arguing further in a similar way to the rest of the proof of
Lemma~\ref{lem1}, we arrive at the relation
\begin{equation}\label{eq3B}
M_p(\Gamma_k)= M_p(f(\Gamma_k^{\,\prime}))\leqslant
\Delta(k)\rightarrow 0
\end{equation}
as $k\rightarrow \infty.$ The relations~(\ref{eq4B})
and~(\ref{eq3B}) contradict each other.
\end{remark}

\section{Some examples}

\begin{example}\label{ex4}
Consider the disk $D=B(1, 1)=\{z\in {\Bbb C}: |z-1|<1\}$ on the
complex plane. Let us define a mapping $f:D\rightarrow {\Bbb C}$ as
follows: $f(z)=z^4.$ The Figure~\ref{fig2} schematically shows the
image of the domain $D$ under the mapping $f;$ the complete image is
marked in gray, the domain that is mapped twice in the disk $B(1,
1)$ is marked in a darker color.
\begin{figure}[h]
\centerline{\includegraphics[scale=0.35]{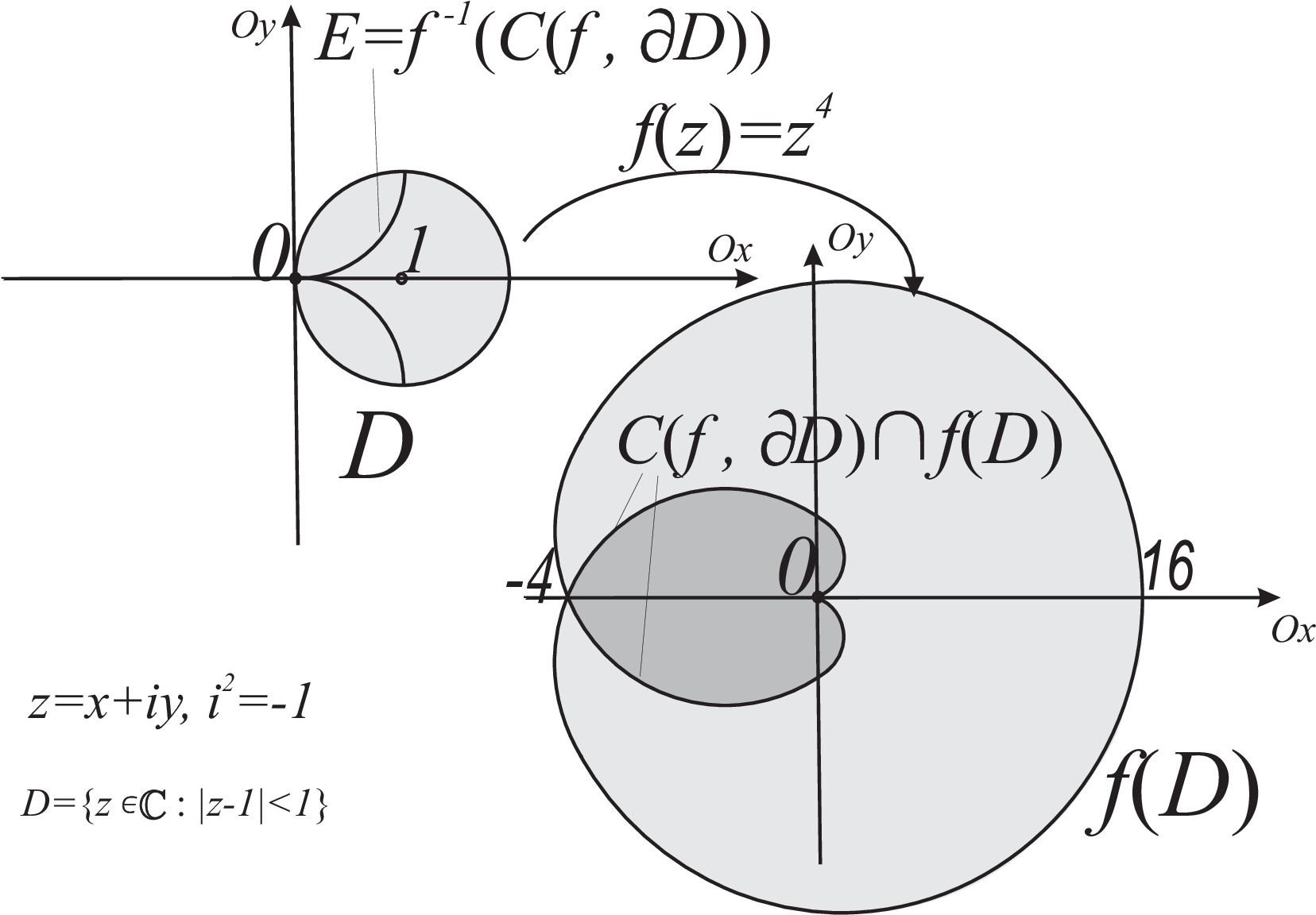}} \caption{An
open quasiregular mapping that satisfies conditions of
Theorem~\ref{th3} }\label{fig2}
\end{figure}
The image of the boundary of the domain $D$ has one point of
self-intersection $w=(-4, 0).$ Paths
$z_1(\varphi)=16\cos^4\varphi\cdot e^{i\cdot4\varphi},$
$\frac{\pi}{4}<\varphi \leqslant \frac{\pi}{2},$ and
$z_2(\varphi)=16\cos^4\varphi\cdot e^{i\cdot 4\varphi},$
$-\frac{\pi}{2}\leqslant\varphi<-\frac{\pi}{4},$ is a part of the
image of the boundary of $D$ that lies inside the mapped domain. It
is not difficult to see that, the complete pre-image in $D$ of this
paths consists of two paths
$\widetilde{z_1}(\varphi)=2\cos\varphi\cdot
e^{i\left(\varphi+\frac{\pi}{2}\right)},$ $\frac{\pi}{4}<\varphi
\leqslant \frac{\pi}{2},$ and
$\widetilde{z_2}(\varphi)=2\cos\varphi\cdot
e^{i\left(\varphi-\frac{\pi}{2}\right)},$
$-\frac{\pi}{2}\leqslant\varphi<-\frac{\pi}{4},$ and there are no
others.

Note that, the mapping $f$ satisfies all conditions of
Theorem~\ref{th3}. Indeed, the set $E:=f^{\,-1}(C(f, \partial D))$
is nowhere dense in $D,$ because $E$ consists only on paths
$\widetilde{z_1}(\varphi)$ and $\widetilde{z_2}(\varphi)$ mentioned
above. Also, $D$ is finitely connected on $E,$ because $D$ is
tho-connected at any inner point of $D$ which belong to the paths
$\widetilde{z_1}(\varphi)$ and $\widetilde{z_2}(\varphi).$
Obviously, $D$ is locally connected at any point of $\partial D,$
while it is three-connected with respect to $E$ at the origin,
two-connected with respect to $E$ at the points $1\pm i$ and locally
connected with respect to $E$ at the rest points. In addition,
$f(D)\setminus C(f, \partial D)$ consists of two components.
Finally, $f(z)=z^4$ is a quasiregular mapping, therefore satisfies
the relation~(\ref{eq2*!A}) for the function $Q(y)\equiv 1$ (see,
e.g., \cite[Theorem~8.1.II]{Ri}. Observe that, $Q(x)\equiv 1$
satisfies simultaneously the conditions $Q\in FMO(x_0)$ at any
$x_0\in {\Bbb R}^n$ and~(\ref{eq6}), as well.
\end{example}

\medskip
\begin{example}\label{ex5}
It is not difficult to construct a similar mapping with some
unbounded function $Q$ in~(\ref{eq2*!A}). For this purpose, let $D$
and $f$ be a domain and a mapping from the previous example,
correspondingly. Let us consider an arbitrary point $z_0\in
D\setminus E,$ where $E:=f^{\,-1}(C(f, \partial D)),$ and let
$0<r_0<\min\{{\rm dist\,}(z_0, \partial D), {\rm dist\,}(z_0, E)\}.$
Put
$$h(z)=\frac{r_0(z-z_0)}{|z-z_0|\log\frac{e}{|(z-z_0)|}}+z_0,\qquad
h(z_0)=z_0\,.$$
Reasoning similarly to~\cite[Proposition~6.3]{MRSY$_2$}, we may show
that $h$ satisfies the relations~(\ref{eq2*!A})--(\ref{eqA2}) in
each point $b\in
\partial D$ for $Q=Q(x)=\log\left(\frac{e}{|x|}\right)$ with $p=2.$

Put
$$F(z)=\begin{cases}(f\circ h)(z)\,,& z\in B(z_0, r_0)\,, \\
(f(z)-z_0)/r_0\,, & z\in D\setminus B(z_0, r_0)
\end{cases}\,.$$
Since $f$ satisfies the relations~(\ref{eq2*!A})--(\ref{eqA2}) with
$Q\equiv 1,$ the mapping $F$ satisfies the
relations~(\ref{eq2*!A})--(\ref{eqA2}) in each point $b\in
\partial D$ for $Q=Q(x)=\log\left(\frac{e}{|x|}\right)$ with $p=2.$
Note that, $Q$ satisfies the condition~(\ref{eq6}) for any point
$b\in \partial D,$ because $Q$ is locally bounded in $\partial D.$
Observe that, the mapping $F:D\rightarrow {\Bbb C}$ satisfies all
conditions of Theorem~\ref{th3}. In particular, $F$ is constructed
so that it does not change the geometry of $D.$ Besides that, $F$ is
open and discrete as a superposition of a homeomorphism $g$ with
some quasiregular mapping. Now, by Theorem~\ref{th3} $F$ has a
continuous extension to any point $b\in\partial D.$ All conditions
regarding the geometry of a domain are also hold, see
Example~\ref{ex4}.
\end{example}

\section{On the equicontinuity of families in the closure of a domain}

For mappings with branching satisfying~(\ref{eq2*!A})--(\ref{eqA2}),
we have obtained several different theorems on the equicontinuity of
families of mappings in the closure of a domain. In particular,
in~\cite{SevSkv$_1$} we are talking about families of mappings with
a certain condition on the pre-image of a continuum, the diameter of
which does not decrease, and in~\cite{SevSkv$_2$} we assumed the
presence of one normalization condition. In these works, the
mappings were assumed to be closed. The main goal of this section is
to extend these results to mappings that are not closed.

\medskip
Given $p\geqslant 1,$ $\delta>0,$ closed sets $E, E_*, F$ in
$\overline{{\Bbb R}^n},$ $n\geqslant 2,$ a domain $D\subset {\Bbb
R}^n$ and a Lebesgue measurable function $Q:D\rightarrow [0,
\infty]$ let us denote by $\frak{R}^{E_*, E, F}_{Q, \delta, p}(D)$
the family of all open discrete mappings $f:D\rightarrow
\overline{{\Bbb R}^n}\setminus F$ satisfying the
conditions~(\ref{eq2*!A})--(\ref{eqA2}) at the point $x_0\in
\overline{D}$ such that:

\medskip
1) $C(f, \partial D)\subset E_*,$

\medskip
2) in any component $K$ of $D^{\,\prime}_f\setminus  E_*,$
$D^{\,\prime}_f:=f(D)$ there is a continuum $K_f\subset K$ such that
$h(K_f)\geqslant \delta$ and $h(f^{\,-1}(K_f), \partial D)\geqslant
\delta>0,$

\medskip
3) $f^{\,-1}(E_*)\subset E.$

\medskip
\begin{theorem}\label{th4} {\it\, Let $p\geqslant 1,$
let $D$ be a domain in ${\Bbb R}^n,$ $n\geqslant 2.$ Assume that:

\medskip
1) the set $E$ is nowhere dense in $D,$ and $D$ is finitely
connected on $E,$ i.e., for any $z_0\in E$ and any neighborhood
$\widetilde{U}$ of $z_0$ there is a neighborhood
$\widetilde{V}\subset \widetilde{U}$ of $z_0$ such that $(D\cap
\widetilde{V})\setminus E$ consists of finite number of components;

\medskip
2) for any $x_0\in\partial D$ there is $m=m(x_0)\in {\Bbb N},$
$1\leqslant m<\infty$ such that the following is true: for any
neighborhood $U$ of $x_0$ there is a neighborhood $V\subset U$ of
$x_0$ and  such that:

\medskip
2a) $V\cap D$ is connected,

\medskip
2b) $(V\cap D)\setminus E$ consists at most of $m$ components.

\medskip
Let for $p=n$ the set $F$ have positive capacity, and for $n-1<p<n$
it is an arbitrary closed set.

\medskip
Suppose that, for any $x_0\in\partial D$ at least one of the
following conditions is satisfied: 1) a function $Q$ has a finite
mean oscillation at $x_0;$ 2)
$q_{x_0}(r)\,=\,O\left(\left[\log{\frac1r}\right]^{n-1}\right)$ as
$r\rightarrow 0;$ 3) the condition
\begin{equation}\label{eq6D}
\int\limits_{0}^{\delta(x_0)}\frac{dt}{t^{\frac{n-1}{p-1}}
q_{x_0}^{\,\prime\,\frac{1}{p-1}}(t)}=\infty
\end{equation}
holds for some $\delta(x_0)>0.$

Let the family of all components of $D^{\,\prime}_f\setminus E_*$ is
equi-uniform over $f\in\frak{R}^{E_*, E, F}_{Q, \delta, p}(D)$ with
respect to $p$-modulus. Then every $f\in\frak{R}^{E_*, E, F}_{Q,
\delta, p}(D)$ has a continuous extension to $\partial D$ and a
family $\frak{R}^{E_*, E, F}_{Q, \delta, p}(\overline{D}),$
consisting of all extended mappings $\overline{f}:
\overline{D}\rightarrow \overline{{\Bbb R}^n},$ is equicontinuous at
$\overline{D}.$ }
\end{theorem}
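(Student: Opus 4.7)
The plan is to split the conclusion into three parts: (i) continuous boundary extension of each individual $f\in\frak{R}^{E_*, E, F}_{Q, \delta, p}(D)$; (ii) equicontinuity of the extended family at interior points of $D$; (iii) equicontinuity at points of $\partial D$.

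For (i), I would appeal to Lemma~\ref{lem1} in the strengthened form of Remark~\ref{rem2}: conditions 1) and 2) of Theorem~\ref{th4} match conditions 1) and 2) of Lemma~\ref{lem1}, while the definition of $\frak{R}^{E_*, E, F}_{Q, \delta, p}(D)$ supplies conditions $3_{\textbf{a}}$ and $3_{\textbf{b}}$ of Remark~\ref{rem2} by the assumed equi-uniformity of the components of $D^{\,\prime}_f\setminus E_*$ and by the uniform continuum $K_f$ with $h(f^{\,-1}(K_f),\partial D)\geqslant\delta$. The three alternatives on $Q$ at $b\in\partial D$ each produce a positive measurable function $\psi$ satisfying (\ref{eq7***})--(\ref{eq5***}); this is standard and can be verified by taking $\psi(t)=1/(t^{(n-1)/(p-1)}q_b^{\,\prime}(t)^{1/(p-1)})$ under the divergence condition~(\ref{eq6D}), $\psi(t)=1/(t\log(1/t))$ under the logarithmic bound, and the same $\log$-weighted choice in the FMO case, the FMO estimate then reducing (\ref{eq5***}) to the growth inequality for integrals of $Q$ against $\psi^p$ in shrinking spherical rings around $b$.

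For (ii), the omission of $F$ by every $f\in\frak{R}^{E_*, E, F}_{Q, \delta, p}(D)$ serves as a normalization: combined with the modulus upper bound (\ref{eq2*!A}) and the same $\psi$-estimates as in (i), it yields equicontinuity of the family on compact subsets of $D$ by a standard modulus-theoretic argument (for $p=n$ the positive $p$-capacity of $F$ provides the geometric control needed to bound from below moduli of path families separating close points from $f^{-1}(F)$; for $n-1<p<n$ any closed nonempty $F$ suffices). For (iii), which is the main task, I would argue by contradiction. Suppose equicontinuity fails at some $b\in\partial D$: there exist $f_k\in\frak{R}^{E_*, E, F}_{Q, \delta, p}(D)$ and $x_k\in\overline{D}$ with $x_k\to b$ but $h(\overline{f_k}(x_k),\overline{f_k}(b))\geqslant\sigma>0$. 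I would then replay the proof of Lemma~\ref{lem1} (via Remark~\ref{rem2}) with $f$ replaced by $f_k$: produce $y_k,z_k\in D\setminus E$ near $b$ whose $f_k$-images are chordally far apart, join them inside shrinking neighborhoods of $b$ using Lemma~\ref{lem2}, observe that at most $m-1$ of the image-points lie in $E_*\supset C(f_k,\partial D)$, isolate a subarc of the image that stays in a single component $K_k$ of $D^{\,\prime}_{f_k}\setminus E_*$ and has chordal diameter at least $\sigma/2(m-1)$, apply the equi-uniformity to obtain a uniform lower bound $M_p(\Gamma_k)\geqslant P>0$ using the continuum $K_{f_k}$ from condition~2) in the definition of the family, then lift via Proposition~\ref{pr3} while using $h(f_k^{\,-1}(K_{f_k}),\partial D)\geqslant\delta$ to keep the liftings strictly away from $\partial D$. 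The upper modulus estimate coming from (\ref{eq2*!A}) together with the uniform $\psi$ from (i) forces $M_p(\Gamma_k)\to 0$, a contradiction.

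The principal obstacle is confirming that every constant appearing in the argument of Remark~\ref{rem2} is genuinely uniform across $\frak{R}^{E_*, E, F}_{Q, \delta, p}(D)$: the lower modulus bound $P$ must come from the equi-uniformity (not from a single $f$), the separation quantity analogous to $r_0$ in~(\ref{eq13})--(\ref{eq14A}) must follow from the uniform $\delta$ in the definition of the family and not from the compactum $C^{\,\prime}_0$ chosen per mapping, and the upper bound $\Delta(k)$ in~(\ref{eq3B}) must depend only on $Q$ and $b$. Once each of these is traced through and shown to depend only on the fixed data $(E_*,E,F,Q,\delta,p,b)$, the contradiction in (iii) is uniform in $f_k$, and the proof is complete.
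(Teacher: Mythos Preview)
Your proposal is correct and follows essentially the same route as the paper: the paper packages the argument into an auxiliary lemma (Lemma~\ref{lem3}) stated with the abstract $\psi$-conditions~(\ref{eq8})--(\ref{eq9}) in place of the three alternatives on $Q$, proves that lemma by exactly the three-part scheme you outline (interior equicontinuity via \cite[Lemma~4.2]{SalSev} and \cite[Lemma~2.4]{GSS}, boundary extension via Lemma~\ref{lem1}/Remark~\ref{rem2}, boundary equicontinuity by replaying Remark~\ref{rem2} for the sequence $f_k$), and then reduces Theorem~\ref{th4} to it. Your identification of the crux---that the constants $P$, $r_0$, and $\Delta(k)$ must be shown to depend only on the fixed data and not on the individual $f_k$---is precisely what the paper's proof of Lemma~\ref{lem3} tracks through.
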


\medskip
The proof of Theorem~\ref{th3} is based on the following lemma.

\medskip
\begin{lemma}\label{lem3} {\it\, Let $p\geqslant 1,$
let $D$ be a domain in ${\Bbb R}^n,$ $n\geqslant 2.$ Assume that:

\medskip
1) the set $E$ is nowhere dense in $D,$ and $D$ is finitely
connected on $E,$ i.e., for any $z_0\in E$ and any neighborhood
$\widetilde{U}$ of $z_0$ there is a neighborhood
$\widetilde{V}\subset \widetilde{U}$ of $z_0$ such that $(D\cap
\widetilde{V})\setminus E$ consists of finite number of components;

\medskip
2) for any $x_0\in\partial D$ there is $m=m(x_0)\in {\Bbb N},$
$1\leqslant m<\infty$ such that the following is true: for any
neighborhood $U$ of $x_0$ there is a neighborhood $V\subset U$ of
$x_0$ and  such that:

\medskip
2a) $V\cap D$ is connected,

\medskip
2b) $(V\cap D)\setminus E$ consists at most of $m$ components.

\medskip
Let for $p=n$ the set $F$ have positive capacity, and for $n-1<p<n$
it is an arbitrary closed set.

\medskip
Suppose that, for any $x_0\in \partial D$ there is
$\varepsilon_0=\varepsilon_0(x_0)>0$ and some positive measurable
function $\psi:(0, \varepsilon_0)\rightarrow (0,\infty)$ such that
\begin{equation}\label{eq8}
0<I(\varepsilon,
\varepsilon_0)=\int\limits_{\varepsilon}^{\varepsilon_0}\psi(t)\,dt
< \infty
\end{equation}
for sufficiently small $\varepsilon\in(0, \varepsilon_0)$ and, in
addition,
\begin{equation}\label{eq9}
\int\limits_{A(\varepsilon, \varepsilon_0, x_0)}
Q(x)\cdot\psi^{\,p}(|x-x_0|)
 \ dm(x) =o(I^p(\varepsilon, \varepsilon_0))\,,
\end{equation}
where $A:=A(x_0, \varepsilon, \varepsilon_0)$ is defined
in~(\ref{eq1**}).

Let the family of all components of $D^{\,\prime}_f\setminus E_*$ is
equi-uniform over $f\in\frak{R}^{E_*, E, F}_{Q, \delta, p}(D)$ with
respect to $p$-modulus. Then every $f\in\frak{R}^{E_*, E, F}_{Q,
\delta, p}(D)$ has a continuous extension to $\partial D$ and a
family $\frak{R}^{E_*, E, F}_{Q, \delta, p}(\overline{D}),$
consisting of all extended mappings $\overline{f}:
\overline{D}\rightarrow \overline{{\Bbb R}^n},$ is equicontinuous at
$\overline{D}.$}
\end{lemma}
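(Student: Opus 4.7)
The plan is a two-stage attack. First I establish that every $f\in\frak{R}^{E_*, E, F}_{Q, \delta, p}(D)$ admits a continuous extension $\overline{f}$ to $\partial D$, by applying Lemma~\ref{lem1} (in the form modified by Remark~\ref{rem2}) to each $f$ separately. Condition~1) of the $\frak{R}$-class ($C(f,\partial D)\subset E_*$) combined with condition~3) ($f^{\,-1}(E_*)\subset E$) gives $f^{\,-1}(C(f,\partial D))\subset E$, so this preimage is nowhere dense and $D$ is finitely connected on it, inherited from the assumption on $E$; the boundary-neighborhood hypotheses at each $x_0\in\partial D$ yield conditions 2a)--2b) of Lemma~\ref{lem1}. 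The equi-uniformity of components of $D^{\,\prime}_f\setminus E_*$ plays the role of $(3_\textbf{a})$ of Remark~\ref{rem2}, while condition~2) of the $\frak{R}$-class supplies the continuum $K_f$ with $h(K_f)\geqslant\delta$ and $h(f^{\,-1}(K_f),\partial D)\geqslant\delta$, taking the role of $(3_\textbf{b})$. The integral conditions~(\ref{eq8})--(\ref{eq9}) are exactly~(\ref{eq7***})--(\ref{eq5***}).

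For equicontinuity at a point $x_0\in\overline{D}$ I would argue by contradiction: suppose there are sequences $f_k\in\frak{R}^{E_*,E,F}_{Q,\delta,p}(\overline{D})$ and $x_k^1,x_k^2\to x_0$ with $h(\overline{f_k}(x_k^1),\overline{f_k}(x_k^2))\geqslant\delta_0>0$. Using continuity of $\overline{f_k}$ and nowhere density of $E$, I may replace the $x_k^j$ by interior points of $D\setminus E$, and apply Lemma~\ref{lem2} to join them by a path $\gamma_k$ in a small neighborhood of $x_0$ containing at most $m-1$ points in $E$. Then $f_k(\gamma_k)$ has at most $m-1$ points in $E_*$, and the pigeonhole-and-triangle-inequality argument from the proof of Lemma~\ref{lem1} produces a subsegment of $f_k(\gamma_k)$ lying in a single component $K_k$ of $D^{\,\prime}_{f_k}\setminus E_*$ with chordal diameter at least $\delta_0/2(m-1)$. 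Combined with the continuum $K_{f_k}\subset K_k$ from $\frak{R}$-condition~2), the equi-uniformity yields a uniform lower bound $M_p(\Gamma_k)\geqslant P>0$ for a family of paths in $K_k$ joining this subsegment to $K_{f_k}$.

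The argument closes by lifting $\Gamma_k$ to a family $\Gamma_k^{\,\prime}$ of maximal $f_k$-liftings starting on $|\gamma_k|$ via Proposition~\ref{pr3}. The uniform separation $h(f_k^{\,-1}(K_{f_k}),\partial D)\geqslant\delta$ rules out any lift escaping to $\partial D$, so the lifts are complete and terminate in $f_k^{\,-1}(K_{f_k})\subset D\setminus B(x_0,\delta)$; substituting $\eta(t)=\psi(t)/I(2^{\,-k},\varepsilon_0)$ in~(\ref{eq2*!A}) and using~(\ref{eq9}) gives $M_p(\Gamma_k)=M_p(f_k(\Gamma_k^{\,\prime}))\leqslant\Delta(k)\to 0$, contradicting the lower bound $P$. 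The role of the set $F$ (positive capacity when $p=n$, arbitrary closed set when $n-1<p<n$) is to guarantee a nondegenerate continuum avoided by the images, handling in particular the case $x_0\in D$, where the lower-bound step can be performed directly without routing through $E_*$. The main obstacle I anticipate is making the ``no-escape'' dichotomy of Proposition~\ref{pr3} uniform across the family $f_k$: this is precisely what the uniform separation constant $\delta$ in $\frak{R}$-condition~2) is designed to provide, and the rest of the estimate then reduces to the single-mapping argument already carried out in Lemma~\ref{lem1}.
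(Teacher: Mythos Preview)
Your outline matches the paper's proof almost step for step: continuous extension via Lemma~\ref{lem1} together with Remark~\ref{rem2}, then equicontinuity on $\partial D$ by the same contradiction scheme (Lemma~\ref{lem2}, pigeonhole on the $m-1$ crossings of $E_*$, equi-uniformity lower bound, lifting via Proposition~\ref{pr3}, and the $\psi/I$ test function in~(\ref{eq2*!A})).

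Two small corrections. First, the sentence ``the uniform separation $h(f_k^{-1}(K_{f_k}),\partial D)\geqslant\delta$ rules out any lift escaping to $\partial D$'' misidentifies the mechanism: that separation says nothing about a lift $\alpha_k$ whose parameter interval is $[a,c)$ with $c<b$. What actually blocks escape is that if $\alpha_k(t)\to\partial D$ then $\beta_k(c)=\lim f_k(\alpha_k(t))\in C(f_k,\partial D)\subset E_*$, contradicting $\beta_k([a,b))\subset K_k\subset D^{\,\prime}_{f_k}\setminus E_*$. The separation constant $\delta$ enters only \emph{after} completeness is established, to place the terminal points $\alpha_k(b)\in f_k^{-1}(K_{f_k})$ outside $B(x_0,r_0)$ and obtain~(\ref{eq15A}). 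Second, the paper does not run the contradiction argument for interior points $x_0\in D$; equicontinuity there is quoted directly from \cite[Lemma~4.2]{SalSev} ($p=n$) and \cite[Lemma~2.4]{GSS} ($n-1<p<n$), which is precisely where the hypothesis on $F$ is consumed. Your closing remark gestures at this but you should invoke those lemmas explicitly rather than fold the interior case into the boundary argument.
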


\medskip
\begin{proof}
The equicontinuity of $\frak{R}^{E_*, E, F}_{Q, \delta, p}(D)$ in
$D$ follows by~\cite[Lemma~4.2]{SalSev} for the case of $p=n$ and
\cite[Lemma~2.4]{GSS} for $n-1<p< n.$

Since any component of $D^{\,\prime}_f\setminus E_*$ is uniform with
respect to $p$-modulus, it has strongly accessible boundary with
respect to $p$-modulus (see \cite[Remark~1]{SevSkv$_1$}). Now the
possibility of continuous extension of any $f\in\frak{R}^{E_*, E,
F}_{Q, \delta, p}(D)$ to $\partial D$ follows from Lemma~\ref{lem1}
and Remark~\ref{rem2}. It remains to prove the equicontinuity of the
extended family $\frak{R}^{E_*, E, F}_{Q, \delta, p}(\overline{D})$
in~$\partial D.$

Suppose the opposite. Then there is $x_0\in\partial D,$
$\varepsilon_0>0,$ a sequence $x_k\rightarrow x_0,$
$x_k\in\overline{D},$ and $f_k\in \frak{R}^{E_*, E, F}_{Q, \delta,
p}(\overline{D})$ such that
\begin{equation}\label{eq12A}
h(f_k(x_k), f_k(x_0))\geqslant \varepsilon_0\,.
\end{equation}
Since $f_k$ has a continuous extension to $\partial D,$ we may
consider that $x_k\in D.$ In addition, it follows from~(\ref{eq12A})
that we may find a sequence $x^{\,\prime}_k\in D,$ $k=1,2,\ldots ,$
such that $x^{\,\prime}_k\rightarrow x_0$ and
\begin{equation}\label{eq13A}
h(f_k(x_k), f_k(x^{\,\prime}_k))\geqslant \varepsilon_0/2\,.
\end{equation}
By the assumption~2), there exists a sequence of neighborhoods
$V_k\subset B(x_0, 2^{\,-k}),$ $k=1,2,\ldots ,$ such that $V_k\cap
D$ is connected and $(V_k\cap D)\setminus E$ consists of $m$
components, $1\leqslant m<\infty.$

We note that the points $x_k$ and $x^{\,\prime}_k,$ $k=1,2,\ldots, $
may be chosen such that $x_k, x^{\,\prime}_k\not\in E.$ Indeed,
since under condition~1) the set $E$ is nowhere dense in $D,$ there
exists a sequence $x_{ki}\in D\setminus E,$ $i=1,2,\ldots ,$ such
that $x_{ki}\rightarrow x_k$ as $i\rightarrow\infty.$ Put
$\varepsilon>0.$ Due to the continuity of the mapping $f_k$ at the
points $x_k$ and $x^{\,\prime}_k,$ for the number $k\in {\Bbb N}$
there are numbers $i_k, j_k\in {\Bbb N}$ such that $h(f_k(x_{ki_k}),
f_k(x_k))<\frac{1}{2^k}$ and $h(f_k(x^{\,\prime}_{kj_k}),
f_k(x^{\,\prime}_k))<\frac{1}{2^k}.$ Now, by~(\ref{eq13A}) and by
the triangle inequality,
$$
h(f_k(x_{ki_k}), f_k(x^{\,\prime}_{kj_k}))\geqslant
h(f_k(x^{\,\prime}_k), f_k(x_{ki_k}))-h(f_k(x^{\,\prime}_k),
f_k(x^{\,\prime}_{kj_k}))\geqslant
$$
\begin{equation}\label{eq14B}\geqslant h(f_k(x_k), f_k(x^{\,\prime}_k))-h(f_k(x_k),
f_k(x_{ki_k}))-h(f_k(x_k), f_k(x^{\,\prime}_{kj_k}))\geqslant
\end{equation}
$$\geqslant\varepsilon_0/2-\frac{2}{2^k}\geqslant \varepsilon_0/4$$
for sufficiently large $k.$ Due to~(\ref{eq14B}), we may assume that
$x_k, x^{\,\prime}_k\not\in E,$  as required.

\medskip
Now, by Lemma~\ref{lem1} there are subsequences $x_{k_l}$ and
$x^{\,\prime}_{k_l},$ $l=1,2,\ldots ,$ belonging to some sequence of
neighborhoods $V_l,$ $l=1,2,\ldots ,$ of the point $x_0$ such that
${\rm diam\,}V_l\rightarrow 0$ as $l\rightarrow\infty$ and, in
addition, any pair $x_{k_l}$ and $x^{\,\prime}_{k_l}$ may be joined
by a path $\gamma_l$ in $V_l,$ where $\gamma_l$ contains at most
$m-1$ points in $E.$ Without loss of generality, we may assume that
the same sequences $x_k$ and $y_k$ satisfy properties mentioned
above. Let $\gamma_k:[0, 1]\rightarrow D,$ $\gamma_k(0)=x_k$ and
$\gamma_k(1)=y_k,$ $k=1,2,\ldots .$

\medskip
Observe that, the path $f_k(\gamma_k)$ contains not more than $m-1$
points in $E_*.$ In the contrary case, there are at least $m$ such
points $b_{1}=f_k(\gamma_k(t_1)), b_{2}=f_k(\gamma_k(t_2)),\ldots,
b_m=f_k(\gamma_k(t_1)),$ $0\leqslant t_1\leqslant t_2\leqslant
\ldots\leqslant t_m\leqslant 1.$ But now the points
$a_{1}=\gamma_k(t_1), a_{2}=\gamma_k(t_2),\ldots, a_m=\gamma_k(t_m)$
are in $E=f^{\,-1}(E_*)$ and simultaneously belong to $\gamma_k.$
This contradicts the definition of $\gamma_k.$

\medskip
Let
$$b_{1}=f_k(\gamma_k(t_1)), b_{2}=f_k(\gamma_k(t_2))\quad,\ldots,\quad
b_l=f_k(\gamma_k(t_1))\,,$$ $$0\leqslant t_1\leqslant t_2\leqslant
\ldots\leqslant t_l\leqslant 1,\qquad 1\leqslant l\leqslant m-1\,,$$
be points in $f_k(\gamma_k)\cap E_*.$ By the relation~(\ref{eq13A})
and due to the triangle inequality,
\begin{equation}\label{eq6C}
\varepsilon_0/2\leqslant h(f_k(x_k),
f_k(y_k))\leqslant\sum\limits_{k=1}^{l-1} h(f_k(\gamma_k(t_k)),
f_k(\gamma_k(t_{k+1}))\,.
\end{equation}
It follows from~(\ref{eq6C}) that, there is $1\leqslant
l=l(k)\leqslant m-1$ such that such that
\begin{equation}\label{eq7F}
h(f(\gamma_k(t_{l(k)})), f_k(\gamma_k(t_{l(k)+1}))\geqslant
\varepsilon_0/2l\geqslant \varepsilon_0/2(m-1)\,.
\end{equation}
Observe that, the set $G_k:=|\gamma_k|_{(t_{l(k)}, t_{l(k)+1})}|$
belongs to $D^{\,\prime}\setminus E.$

\medskip
Observe that, $G_k$ contains some a continuum $\widetilde{G}_k$ with
$h(\widetilde{G}_k)\geqslant \varepsilon_0/4(m-1)$ for any $k\in
{\Bbb N}.$ Indeed, by the construction of $G_k,$ there is a sequence
of points $a_{s, k}:=f_k(\gamma_k(p_s))\rightarrow
w_k:=f(\gamma_k(t_{l(k)}))$ as $s\rightarrow \infty$ and $b_{s,
k}=f_k(\gamma_k(q_s))\rightarrow f(\gamma_k(t_{l(k)+1}))$ as
$s\rightarrow \infty,$ where $t_{l(k)})<
p_s<q_s<\gamma_k(t_{l(k)+1}))$ and $a_{s, k}, b_{s, k}\in G_k.$
By the triangle inequality and by~(\ref{eq7F})
$$
h(a_{s, k}, b_{s, k})\geqslant h(b_{s, k}, w_k)-h(w_k, a_{s,
k})\geqslant
$$
\begin{equation}\label{eq11A}
h(f_k(\gamma_k(t_{l(k)+1})), w_k)-h(f_k(\gamma_k(t_{l(k)+1})), b_{s,
k})-h(w_k, a_{s, k})\geqslant
\end{equation}
$$\varepsilon_0/2(m-1)-h(f_k(\gamma_k(t_{l(k)+1}), b_{s,
k}))-h(w_k, a_{s, k})\,.$$
Since $a_{s, k}:=f_k(\gamma_k(p_s))\rightarrow
w_k:=f_k(\gamma_k(t_{l(k)}))$ as $s\rightarrow \infty$ and $b_{s,
k}=f_k(\gamma_k(q_s))\rightarrow f_k(\gamma_k(t_{l(k)+1}))$ as
$s\rightarrow \infty,$ it follows from the last inequality that
there is $s=s(k)\in {\Bbb N}$ such that
\begin{equation}\label{eq12B}
h(a_{s(k), k}, b_{s(k), k})\geqslant\varepsilon/4(m-1)\,.
\end{equation}
Now, we set
$$\widetilde{G}_k:=f_k(\gamma_k)|_{[p_{s(k)}, q_{s(k)}]}\,.$$
In other words, $\widetilde{G}_k$ is a part of the path
$f_k(\gamma_k)$ between points $a_{s(k), k}$ and $b_{s(k), k}.$
Since $\widetilde{G}_k$ is a continuum in
$D^{\,\prime}_{f_k}\setminus E_*,$ there is a component $K_k$ of
$D^{\,\prime}_{f_k}\setminus E_*,$ containing $K_k.$ Let us apply
the definition of equi-uniformity for he sets $\widetilde{G}_k$ and
$K_{f_k}$ in $K_k$ (here $K_{f_k}$ is a continuum from the
definition of the class $\frak{R}^{E_*, E, F}_{Q, \delta, p}(D)$, in
particular, $h(K_{f_k})\geqslant \delta.$) Due to this definition,
for the number $\delta_*:=\min\{\delta, \varepsilon/4(m-1)\}>0$
there is $P>0$ such that
\begin{equation}\label{eq1F}
M_p(\Gamma(\widetilde{G}_k, K_{f_k}, K_k))\geqslant P
>0\,, \qquad k=1,2,\ldots\,.
\end{equation}
Let us to show that, the relation~(\ref{eq1F}) contradicts with the
definition of $f$ in~(\ref{eq2*!A}) together with the
conditions~(\ref{eq8})--(\ref{eq9}). Indeed, let us denote by
$\Gamma_k$ the family of all half-open paths $\beta_k:[a,
b)\rightarrow \overline{{\Bbb R}^n}$ such that $\beta_k(a)\in
|f_k(\gamma_k)|,$ $\beta_k(t)\in K_k$ for all $t\in [a, b)$ and,
moreover, $\lim\limits_{t\rightarrow b-0}\beta_k(t):=B_k\in
K_{f_k}.$ Obviously, by~(\ref{eq1F})
\begin{equation}\label{eq4C}
M_p(\Gamma_k)=M_p(\Gamma(\widetilde{G}_k, K_{f_k}, K_k))\geqslant P
>0\,, \qquad k=1,2,\ldots\,.
\end{equation}
Consider the family $\Gamma_k^{\,\prime}$ of all maximal
$f_k$-liftings $\alpha_k:[a, c)\rightarrow D$ of the family
$\Gamma_k$ starting at $|\gamma_k|;$ such a family exists by
Proposition~\ref{pr3}.

Observe that, the situation when $\alpha_k\rightarrow \partial D$ as
$k\rightarrow\infty$ is impossible. Suppose the opposite: let
$\alpha_k(t)\rightarrow \partial D$ as $t\rightarrow c.$ Let us
choose an arbitrary sequence $\varphi_m\in [0, c)$ such that
$\varphi_m\rightarrow c-0$ as $m\rightarrow\infty.$ Since the space
$\overline{{\Bbb R}^n}$ is compact, the boundary $\partial D$ is
also compact as a closed subset of the compact space. Then there
exists $w_m\in
\partial D$ such that
\begin{equation}\label{eq7G}
h(\alpha_k(\varphi_m), \partial D)=h(\alpha_k(\varphi_m), w_m)
\rightarrow 0\,,\qquad m\rightarrow \infty\,.
\end{equation}
Due to the compactness of $\partial D$, we may assume that
$w_m\rightarrow w_0\in \partial D$ as $m\rightarrow\infty.$
Therefore, by the relation~(\ref{eq7G}) and by the triangle
inequality
\begin{equation}\label{eq8C}
h(\alpha_k(\varphi_m), w_0)\leqslant h(\alpha_k(\varphi_m),
w_m)+h(w_m, w_0)\rightarrow 0\,,\qquad m\rightarrow \infty\,.
\end{equation}
On the other hand,
\begin{equation}\label{eq9C}
f_k(\alpha_k(\varphi_m))=\beta_k(\varphi_m)\rightarrow \beta(c)
\,,\quad m\rightarrow\infty\,,
\end{equation}
because by the construction the path $\beta_k(t),$ $t\in [a, b],$
lies in $K_k\subset D^{\,\prime}_{f_k}\setminus E_*$ together with
its finite ones points. At the same time, by~(\ref{eq8C})
and~(\ref{eq9C}) we have that $\beta_k(c)\in C(f_k,
\partial D)\subset E_*$ by the definition of the class
$\frak{R}^{E_*, E, F}_{Q, \delta, p}(D).$ The inclusions
$\beta_k\subset D^{\,\prime}\setminus E_*$ and $\beta_k(c)\in E_*$
contradict each other.

\medskip
Therefore, by Proposition~\ref{pr3} $\alpha_k\rightarrow x_1\in D$
as $t\rightarrow c-0,$ and $c_b$ and $f_k(\alpha_k(b))=f_k(x_1).$ In
other words, the $f_k$-lifting $\alpha_k$ is complete, i.e.,
$\alpha_k:[a, b]\rightarrow D.$ Besides that, it follows from that
$\alpha_k(b)\in f_k^{\,-1}(K_{f_k}).$

\medskip
Again, by the definition of the class $\frak{R}^{E_*, E, F}_{Q,
\delta, p}(D),$
\begin{equation}\label{eq13B}
h(f_k^{\,-1}(K_{f_k}), \partial D)\geqslant \delta>0\,.
\end{equation}
Since $x_0\ne\infty,$ it follows from~(\ref{eq13B}) that
\begin{equation}\label{eq14C}
f_k^{\,-1}(K_{f_k})\subset D\setminus B(x_0, r_0)
\end{equation}
for any $k\in {\Bbb N}$ and some $r_0>0.$ Let $k$ be such that
$2^{\,-k}<\varepsilon_0.$ We may consider that $\varepsilon_0<r_0.$
Due to~(\ref{eq14C}), we may show that
\begin{equation}\label{eq15A}
\Gamma_k^{\,\prime}>\Gamma(S(x_0, 2^{\,-k}), S(x_0, \varepsilon_0),
D)
\end{equation}
(see \cite[Theorem~1.I.5, \S46]{Ku}). Observe that the function
$$\eta_k(t)=\left\{
\begin{array}{rr}
\psi(t)/I(2^{\,-k}, \varepsilon_0), &   t\in (2^{\,-k},
\varepsilon_0),\\
0,  &  t\in {\Bbb R}\setminus (2^{\,-k}, \varepsilon_0)\,,
\end{array}
\right. $$
where $I(2^{\,-k}, \varepsilon_0)$ is defined in~(\ref{eq7***}),
satisfies~(\ref{eqA2}) for $r_1:=2^{\,-k},$ $r_2:=\varepsilon_0.$
Therefore, by the definition of the mapping in~(\ref{eq2*!A}) and
by~(\ref{eq15A}) we obtain that
\begin{equation}\label{eq11*A}
M_p(f(\Gamma^{\,\prime}_k))\leqslant \Delta(k)\,,
\end{equation}
where $\Delta(k)\rightarrow 0$ as $k\rightarrow \infty.$ However,
$\Gamma_k=f_k(\Gamma_k^{\,\prime}).$ Thus, by~(\ref{eq11*A}) we
obtain that
\begin{equation}\label{eq3A}
M_p(\Gamma_k)= M_p(f_k(\Gamma_k^{\,\prime}))\leqslant
\Delta(k)\rightarrow 0
\end{equation}
as $k\rightarrow \infty.$ However, the relation~(\ref{eq3A})
together with the inequality~(\ref{eq4C}) contradict each other,
which proves the lemma. $\Box$
\end{proof}

\medskip
{\bf \noindent Victoria Desyatka} \\
Zhytomyr Ivan Franko State University,  \\
40 Velyka Berdychivs'ka Str., 10 008  Zhytomyr, UKRAINE \\
victoriazehrer@gmail.com

\medskip
\medskip
{\bf \noindent Evgeny Sevost'yanov} \\
{\bf 1.} Zhytomyr Ivan Franko State University,  \\
40 Velyka Berdychivs'ka Str., 10 008  Zhytomyr, UKRAINE \\
{\bf 2.} Institute of Applied Mathematics and Mechanics\\
of NAS of Ukraine, \\
19 Henerala Batyuka Str., 84 116 Slov'yansk,  UKRAINE\\
esevostyanov2009@gmail.com

\end{document}